\def\B{\mathscr B}
\def\C{\mathbb C}
\def\d{\mathrm{d}}
\def\H{\mathcal H}
\def\N{\mathbb N}
\def\R{\mathbb R}
\def\dom{\mathcal D}
\def\e{\mathop{\mathrm{e}}\nolimits}
\def\im{\mathop{\mathrm{Im}}\nolimits}
\def\linf{\mathrm{L}^\infty}
\def\ltwo{\mathrm{L}^2}
\def\lone{\mathrm{L}^1}
\DeclareMathOperator*{\slim}{s\;\!-lim\;\!}
\newtheorem{Theorem}{Theorem}[section]
\newtheorem{Remark}[Theorem]{Remark}
\newtheorem{Lemma}[Theorem]{Lemma}
\newtheorem{Assumption}[Theorem]{Assumption}
\newtheorem{Proposition}[Theorem]{Proposition}
\begin{document}

%--------------------------------------------------------------------------------------
% Title
%--------------------------------------------------------------------------------------

\title{Spectral properties of horocycle flows for surfaces of constant negative
curvature}

\author{R. Tiedra de Aldecoa\footnote{Supported by the Chilean Fondecyt Grant 1130168
and by the Iniciativa Cientifica Milenio ICM RC120002 ``Mathematical Physics'' from
the Chilean Ministry of Economy.}}

\date{\small}
\maketitle
\vspace{-1cm}

\begin{quote}
\emph{
\begin{itemize}
\item[] Facultad de Matem\'aticas, Pontificia Universidad Cat\'olica de Chile,\\
Av. Vicu\~na Mackenna 4860, Santiago, Chile
\item[] \emph{E-mail:} rtiedra@mat.puc.cl
\end{itemize}
}
\end{quote}

%--------------------------------------------------------------------------------------

\begin{abstract}
We consider flows, called $W^{\rm u}$ flows, whose orbits are the unstable manifolds
of a codimension one Anosov flow. Under some regularity assumptions, we give a short
proof of the strong mixing property of $W^{\rm u}$ flows and we show that $W^{\rm u}$
flows have purely absolutely continuous spectrum in the orthocomplement of the
constant functions. As an application, we obtain that time changes of the classical
horocycle flows for compact surfaces of constant negative curvature have purely
absolutely continuous spectrum in the orthocomplement of the constant functions for
time changes in a regularity class slightly less than $C^2$. This generalises recent
results on time changes of horocycle flows.
\end{abstract}

\textbf{2010 Mathematics Subject Classification:} 37A25, 37A30, 37C10, 37C40, 37D20,
37D40, 58J51.

\smallskip

\textbf{Keywords:} Horocycle flow, Anosov flow, strong mixing, continuous spectrum,
commutator methods.

%--------------------------------------------------------------------------------------
\section{Introduction}
\setcounter{equation}{0}
%--------------------------------------------------------------------------------------

Horocycle flows for compact surfaces of constant negative curvature, and their
generalisations, are a classical object of study in dynamical systems. Since the
papers of G. A. Hedlund \cite{Hed36,Hed39} and E. Hopf \cite{Hop39} in the 1930's,
many properties of these flows have been put into evidence (far too many to be listed
here). As for spectral properties, in particular the structure of the essential
spectrum, not that many results are available.

In 1953, O. S. Parasyuk \cite{Par53} has shown that the classical horocycle flows for
compact surfaces of constant negative curvature have countable Lebesgue spectrum. In
1974, A. G. Kushnirenko \cite{Kus74} has shown that some small time changes of the
classical horocycle flows for compact surfaces of constant negative curvature are
mixing, and thus have purely continuous spectrum in the orthocomplement of the
constant functions. In 1977, B. Marcus \cite{Mar77} has shown that a large class of
minimal $W^{\rm u}$ flows associated with codimension one Anosov flows (in particular,
a large class of reparametrisations of the classical horocycle flows for compact
surfaces of negative curvature) are mixing. Finally, more recently, G. Forni and C.
Ulcigrai \cite{FU12}, and the author \cite{Tie12,Tie15} have shown that sufficiently
smooth time changes of the classical horocycle flows for compact surfaces of constant
negative curvature have purely absolutely continuous spectrum in the orthocomplement
of the constant functions (in \cite{FU12} Lebesgue spectrum is also obtained, and in
\cite{Tie12} surfaces of finite volume are also considered).

The purpose of this paper is to extend these last results on the absolutely continuous
spectrum of time changes of horocycle flows to a very general class of time changes,
namely, time changes in a regularity class slightly less than $C^2$. Our approach is
the following. We consider as B. Marcus a continuous minimal $W^{\rm u}$ flow, that
is, a continuous minimal flow whose orbits are the unstable manifolds of a
$C^{1+\varepsilon}$ codimension one Anosov flow on a compact connected Riemannian
manifold (minimality is required for the $W^{\rm u}$ flow to be uniquely ergodic).
Under some regularity assumptions on the $W^{\rm u}$ flow and the Anosov flow, we give
a short proof of the strong mixing property of the $W^{\rm u}$ flow. Then, under some
additional regularity assumption, we construct a self-adjoint operator, called
conjugate operator, satisfying a suitable positive commutator estimate with the
self-adjoint generator of the $W^{\rm u}$ flow. Finally, we deduce from this positive
commutator estimate and commutator methods that the generator of the $W^{\rm u}$ flow
has purely absolutely continuous spectrum in the orthocomplement of the constant
functions. As an application, we obtain that time changes of the classical
horocycle flows for compact surfaces of constant negative curvature have purely
absolutely continuous spectrum in the orthocomplement of the constant functions for
time changes in a regularity class slightly less than $C^2$.

Here is a description of the content of the paper. In Section \ref{Sec_Mixing},
we recall some definitions and results on codimension one Anosov flows and minimal
$W^{\rm u}$ flows, we introduce our regularity assumptions (Assumptions \ref{ass_1}
and \ref{ass_2}), and we give a proof of the strong mixing property of the $W^{\rm u}$
flow (Theorem \ref{Theorem_mixing}). In Section \ref{Sec_spectrum}, we recall the
needed facts on commutators of operators and regularity classes associated with them.
Then, under some additional regularity assumption (Assumption \ref{ass_3}), we
construct the conjugate operator (Proposition \ref{Proposition_conjugate}), we prove
the positive commutator estimate (Proposition \ref{Proposition_Mourre}), and we show
that  the generator of the $W^{\rm u}$ flow has purely absolutely continuous spectrum
in the orthocomplement of the constant functions (Theorem \ref{Thm_spectrum}).
Finally, we present the application of this result to time changes of the classical
horocycle flows for compact surfaces of constant negative curvature (Remark
\ref{rem_final}).\\

\noindent
{\bf Acknowledgements.} The author thanks D. Krej{\v{c}}i{\v{r}}{\'{\i}}k for
interesting discussions and for his warm hospitality at the Department of Theoretical
Physics of the Nuclear Physics Institute in \v Re\v z in January 2015. The author also
thanks an anonymous referee for various useful remarks which helped reformulate some
results of this manuscript. 

%--------------------------------------------------------------------------------------
\section{Strong mixing}\label{Sec_Mixing}
\setcounter{equation}{0}
%--------------------------------------------------------------------------------------

In this section, we recall some definitions and results on codimension one Anosov
flows and minimal $W^{\rm u}$ flows, and we present a short proof of the strong
mixing property of a class of minimal $W^{\rm u}$ flows. We follow fairly closely
the presentation and notations of B. Marcus \cite{Mar77}, but we refer to the review
papers \cite{Ano69,Mat95,Pes81} for additional information.

A $C^{1+\varepsilon}$ Anosov flow on a compact connected Riemannian manifold $M$ with
distance $d:M\times M\to[0,\infty)$ is a $C^{1+\varepsilon}$ flow $\{f_t\}_{t\in\R}$
with $\varepsilon>0$, without fixed points, satisfying the following property: through
each point $x\in M$ pass three submanifolds $W^{\rm u}(x)$, $W^{\rm s}(x)$, and
$\hbox{Orb}(x)$ whose tangent spaces $E^{\rm u}_x$, $E^{\rm s}_x$, and $E_x$
(respectively) vary continuously with $x\in M$ and satisfy
$$
T_xM=E^{\rm u}_x\oplus E^{\rm s}_x\oplus E_x.
$$
The regularity assumption on $\{f_t\}_{t\in\R}$ means that the function
$\R\times M\ni(t,x)\mapsto f_t(x)\in M$ is of class $C^{1+\varepsilon}$. So,
$\{f_t\}_{t\in\R}$ has a vector field $X_f$ which is H\"older continuous with exponent
$\varepsilon$. The submanifolds $W^{\rm u}(x)$, $W^{\rm s}(x)$, and $\hbox{Orb}(x)$
are called unstable manifolds, stable manifolds, and orbits (respectively), and are
characterised by
\begin{align*}
&W^{\rm u}(x)=\Big\{y\in M\mid\lim_{t\to-\infty}d\big(f_t(x),f_t(y)\big)=0\Big\},\\
&W^{\rm s}(x)=\Big\{y\in M\mid\lim_{t\to+\infty}d\big(f_t(x),f_t(y)\big)=0\Big\},\\
&\hbox{Orb}(x)=\big\{f_t(x)\mid t\in\R\big\}.
\end{align*}
The following two facts are well-known \cite{Ano69}:
\begin{enumerate}
\item[(i)] The families $\{W^{\rm u}(x)\}_{x\in M}$ and $\{W^{\rm s}(x)\}_{x\in M}$
each form a continuous foliation of $M$ (that is, if $y\in W^{\rm u}(x)$ then
$W^{\rm u}(y)=W^{\rm u}(x)$, and $W^{\rm u}(x)$ varies locally continuously with
$x\in M$).
\item[(ii)] $f_t\big(W^{\rm u}(x)\big)=W^{\rm u}\big(f_t(x)\big)$ and
$f_t\big(W^{\rm s}(x)\big)=W^{\rm s}\big(f_t(x)\big)$ for all $t\in\R$ and $x\in M$.
\end{enumerate}

In this paper, we consider a codimension one Anosov flow $\{f_t\}_{t\in\R}$ such that
$\{W^{\rm u}(x)\}_{x\in M}$ is a one-dimensional orientable continuous foliation of
$M$ which defines a continuous minimal flow $\{\phi_s\}_{s\in\R}$ whose orbits
are the unstable manifolds. Such a flow $\{\phi_s\}_{s\in\R}$ is called a minimal
$W^{\rm u}$ flow or a minimal $W^{\rm u}$ parametrisation, and it is uniquely ergodic
with respect to a Borel probability mesure $\mu$ on $M$ \cite{BM77,Mar75}. However,
the unique invariant measure $\mu$ is in general not absolutely continuous with
respect to the volume element \cite[Sec.~6]{Mar77} \cite{Mar70}.

Since $f_t\big(W^{\rm u}(x)\big)=W^{\rm u}\big(f_t(x)\big)$ for all $t\in\R$ and
$x\in M$, there exists a function $s^*:\R\times\R\times M\to\R$ such that
\begin{equation}\label{eq_commutation}
\big(f_t\circ\phi_s\big)(x)=\big(\phi_{s^*(t,s,x)}\circ f_t\big)(x)
\quad\hbox{for all $s,t\in\R$ and $x\in M$.}
\end{equation}
This commutation relation, which describes how the Anosov flow $\{f_t\}_{t\in\R}$
expands $W^{\rm u}$ orbits, is the starting point of our analysis. It generalises the
well-known commutation relation \cite[Rem.~IV.1.2]{BM00} between the geodesic flow and
the classical horocycle flow on the unit tangent bundle of compact orientable surfaces
of constant negative curvature. We recall three facts in relation with
\eqref{eq_commutation}:
\begin{enumerate}
\item[(iii)] The family $\{W^{\rm u}(x)\}_{x\in M}$ admits a uniformly expanding
parametrisation, that is, a continuous parametrisation $\{\widetilde\phi_s\}_{s\in\R}$
such that $f_t\circ\widetilde\phi_s=\widetilde\phi_{\lambda^ts}\circ f_t$ for some
constant $\lambda>1$ and all $s,t\in\R$ \cite[Rem.~1.8 \& Prop.~2.1]{Mar75} (the
constant $\lambda$ is equal to $\e^{h(f_1)}$, with $h(f_1)$ the topological entropy of
$f_1$).
\item[(iv)] Since $\{\widetilde\phi_s\}_{s\in\R}$ is a continuous reparametrisation of
$\{\phi_s\}_{s\in\R}$, and since $\{\phi_s\}_{s\in\R}$ is uniquely ergodic with
respect to the measure $\mu$, the flow $\{\widetilde\phi_s\}_{s\in\R}$ is uniquely
ergodic with respect to a measure $\widetilde\mu$ given in terms of $\mu$ (however,
the measure $\widetilde\mu$ is in general not absolutely continuous with respect to
the measure $\mu$ \cite[\S~3~\&~4]{BS40}).
\item[(v)] The measure $\widetilde\mu$ is invariant under the Anosov flow
$\{f_t\}_{t\in\R}$ \cite[Rem.~6.4]{Mar77}.
\end{enumerate}

In order to be able to define a self-adjoint generator for the flow
$\{\phi_s\}_{s\in\R}$ and to have a simple relation between the measures $\mu$ and
$\widetilde\mu$, we assume the following regularity condition:

\begin{Assumption}\label{ass_1}
The flow $\{\phi_s\}_{s\in\R}$ is of class $C^1$, and $\{\widetilde\phi_s\}_{s\in\R}$
is a $C^1$ reparametristation of $\{\phi_s\}_{s\in\R}$.
\end{Assumption}

Under Assumption \ref{ass_1}, the flow $\{\phi_s\}_{s\in\R}$ has a continuous vector
field $X_\phi$, and there exists a function $\tau\in C^1(M\times\R;\R)$ such that
$\phi_s(x)=\widetilde\phi_{\tau(x,s)}(x)$, $\tau(x,0)=0$, $\tau(x,\;\!\cdot\;\!)$ is
strictly increasing and $\tau(x,s+t)=\tau(x,s)+\tau\big(\phi_s(x),t\big)$ for all
$s,t\in\R$ and $x\in M$ \cite[\S~1]{BS40} (the function $\tau(x,\;\!\cdot\;\!)$ can be
chosen strictly increasing for all $x\in M$ because $M$ is connected). These
properties imply in particular that $\big(\partial_2\tau\big)(x,0)>0$ for all
$x\in M$. Therefore, the function
$$
\rho:M\to\R,~~x\mapsto\frac1{(\partial_2\tau)(x,0)},
$$
is well-defined and belongs to $C\big(M;(0,\infty)\big)$, the flow
$\{\widetilde\phi_s\}_{s\in\R}$ has continuous vector field $\rho X_\phi$, and the
measure $\widetilde\mu$ satisfies \cite[Prop.~3]{Hum74}:
\begin{equation}\label{eq_mu_tilde}
\widetilde\mu=\mu/\widetilde\rho
\quad\hbox{with}\quad
\widetilde\rho:=\rho\int_M\d\mu\,\rho^{-1}.
\end{equation}
Also, one verifies that the pullback operators in $\H:=\ltwo(M,\mu)$ associated with
the flow $\{\phi_s\}_{s\in\R}$,
$$
U^\phi_s\varphi:=\varphi\circ\phi_s,\quad s\in\R,~\varphi\in\H,
$$
define a strongly continuous $1$-parameter group of unitary operators with
$U^\phi_s\hspace{1pt}C^1(M)\subset C^1(M)$ for all $s\in\R$. Thus, Nelson's criterion
\cite[Thm.~VIII.10]{RS80} implies that the generator of the group
$\{U^\phi_s\}_{s\in\R}$,
$$
H_\phi\varphi:=\slim_{s\to0}is^{-1}\big(U^\phi_s-1\big)\varphi,
\quad\varphi\in\dom(H_\phi):=\left\{\varphi\in\H\mid
\lim_{s\to0}|s|^{-1}\big\|\big(U^\phi_s-1\big)\varphi\big\|<\infty\right\},
$$
is essentially self-adjoint on $C^1(M)$ and given by
$$
H_\phi\varphi=iX_\phi\varphi,
\quad\varphi\in C^1(M).
$$
On another hand, the pullback operators associated with the flow $\{f_t\}_{t\in\R}$,
$$
U^f_t\varphi:=\varphi\circ f_t,
\quad t\in\R,~\varphi\in\H,
$$
are not unitary if $\rho\not\equiv1$, but they define a strongly continuous
$1$-parameter group of bounded operators:

\begin{Lemma}\label{Lemma_strong}
Suppose that Assumption \ref{ass_1} is satisfied. Then, $U^f_t\in\B(\H)$ for all 
$t\in\R$, $U^f_sU^f_t=U^f_{s+t}$ for all $s,t\in\R$, $U^f_0=1$, and
$\lim_{\varepsilon\to0}\big\|\big(U^f_{t+\varepsilon}-U^f_t\big)\varphi\big\|=0$ for
all $t\in\R$ and $\varphi\in\H$.
\end{Lemma}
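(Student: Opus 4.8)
The plan is to transfer everything to the $\{f_t\}$-invariant measure $\widetilde\mu$, which under Assumption \ref{ass_1} is comparable to $\mu$. The group laws $U^f_0=1$ and $U^f_sU^f_t=U^f_{s+t}$ are purely algebraic: they follow from $f_0=\id$ and $f_s\circ f_t=f_{s+t}$, since $(U^f_sU^f_t\varphi)(x)=(U^f_t\varphi)\big(f_s(x)\big)=\varphi\big(f_{s+t}(x)\big)=(U^f_{s+t}\varphi)(x)$ for every $\varphi\in\H$ and $x\in M$; no measure-theoretic input is needed here.

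For the boundedness, I would use that, by fact (v), the measure $\widetilde\mu$ is invariant under $\{f_t\}_{t\in\R}$, so the operators $U^f_t$ act unitarily on $\ltwo(M,\widetilde\mu)$; in particular $\|U^f_t\varphi\|_{\ltwo(M,\widetilde\mu)}=\|\varphi\|_{\ltwo(M,\widetilde\mu)}$ for all $t\in\R$. On the other hand, \eqref{eq_mu_tilde} gives $\mu=\widetilde\rho\,\widetilde\mu$ with $\widetilde\rho\in C\big(M;(0,\infty)\big)$, so compactness of $M$ provides constants $0<c\le\widetilde\rho\le C'<\infty$ on $M$; hence $\ltwo(M,\mu)$ and $\ltwo(M,\widetilde\mu)$ coincide as vector spaces and carry equivalent norms, $c\,\|\varphi\|_{\ltwo(M,\widetilde\mu)}^2\le\|\varphi\|_\H^2\le C'\,\|\varphi\|_{\ltwo(M,\widetilde\mu)}^2$. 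Combining the two facts, $\|U^f_t\varphi\|_\H^2\le C'\,\|U^f_t\varphi\|_{\ltwo(M,\widetilde\mu)}^2=C'\,\|\varphi\|_{\ltwo(M,\widetilde\mu)}^2\le(C'/c)\,\|\varphi\|_\H^2$, so $U^f_t\in\B(\H)$ with a bound on $\|U^f_t\|_{\B(\H)}$ that is even independent of $t$. This is the only step that genuinely uses the structure built before the lemma (facts (iv)--(v) and \eqref{eq_mu_tilde}, hence Assumption \ref{ass_1}), and I regard it as the main point of the proof; the rest is routine.

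Finally, for the strong continuity, since $U^f_{t+\varepsilon}=U^f_\varepsilon U^f_t$ and each $U^f_t$ is bounded, it suffices to show $\lim_{\varepsilon\to0}\big\|\big(U^f_\varepsilon-1\big)\psi\big\|=0$ for every $\psi\in\H$. For $\psi\in C(M)$ this is immediate: the map $(t,x)\mapsto f_t(x)$, being continuous on the compact set $[-1,1]\times M$, is uniformly continuous, so $\psi\circ f_\varepsilon\to\psi$ uniformly on $M$ as $\varepsilon\to0$, whence $\big\|\big(U^f_\varepsilon-1\big)\psi\big\|\le\|\psi\circ f_\varepsilon-\psi\|_{\linf(M)}\to0$ (using that $\mu$ is a probability measure). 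For general $\psi\in\H$ one then invokes the density of $C(M)$ in $\H$ together with the uniform bound $\kappa:=\sup_{|\varepsilon|\le1}\|U^f_\varepsilon\|_{\B(\H)}<\infty$ from the previous step: given $\delta>0$, choose $\psi_0\in C(M)$ with $\|\psi-\psi_0\|<\delta$; then $\big\|\big(U^f_\varepsilon-1\big)\psi\big\|\le(\kappa+1)\delta+\big\|\big(U^f_\varepsilon-1\big)\psi_0\big\|$ for $|\varepsilon|\le1$, and letting first $\varepsilon\to0$ and then $\delta\to0$ gives the claim.
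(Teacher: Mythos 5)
Your proof is correct and essentially mirrors the paper's argument: both rely on the $f_t$-invariance of $\widetilde\mu$ together with the boundedness of $\widetilde\rho$ away from $0$ and $\infty$ to bound $\|U^f_t\|$ (indeed your constant $C'/c=\max(\widetilde\rho)/\min(\widetilde\rho)=\max(\rho)/\min(\rho)$ agrees with the paper's $\max(\rho)/\min(\rho)$), and both extend strong continuity from $C(M)$ to $\H$ by density plus uniform boundedness. The only cosmetic differences are that you package the norm estimate as ``unitary on $\ltwo(M,\widetilde\mu)$ plus equivalent norms'' instead of a direct change of variables, and you reduce to $t=0$ via the group law and invoke uniform continuity of $(t,x)\mapsto f_t(x)$ on $[-1,1]\times M$, whereas the paper establishes continuity at general $t$ via dominated convergence.
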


\begin{proof}
A direct calculation using \eqref{eq_mu_tilde} and the fact that $\widetilde\mu$ is
invariant under $\{f_t\}_{t\in\R}$ implies for $t\in\R$ and $\varphi\in\H$ that
$$
\big\|U^f_t\varphi\big\|^2
=\int_M\d\widetilde\mu\,\widetilde\rho\;\!\big|\varphi\circ f_t\big|^2
=\int_M\d\widetilde\mu\,\big(\widetilde\rho\circ f_{-t}\big)|\varphi|^2
=\int_M\d\mu\,\frac{\rho\circ f_{-t}}\rho\;\!|\varphi|^2\\
\le\frac{\max(\rho)}{\min(\rho)}\;\!\|\varphi\|^2.
$$
Thus, $U^f_t\in\B(\H)$ with
\begin{equation}\label{eq_norm_U^f}
\big\|U^f_t\big\|\le\sqrt{\frac{\max(\rho)}{\min(\rho)}}.
\end{equation}
The group properties $U^f_sU^f_t=U^f_{s+t}$ for $s,t\in\R$ and $U^f_0=1$ are evident.
To show the last property, take $t\in\R$ and $\varphi\in C(M)$. Then, the continuity
of $\big(\varphi\circ f_{t+\varepsilon}-\varphi\circ f_t\big)$ and Lebesgue dominated
convergence theorem imply that
$$
\lim_{\varepsilon\to0}\big\|\big(U^f_{t+\varepsilon}-U^f_t\big)\varphi\big\|^2
=\int_M\d\mu\,\lim_{\varepsilon\to0}
\big|\varphi\circ f_{t+\varepsilon}-\varphi\circ f_t\big|^2
=0.
$$
Since $C(M)$ is dense in $\H$ and $\big(U^f_{t+\varepsilon}-U^f_t\big)\in\B(\H)$, this
implies that
$\lim_{\varepsilon\to0}\big\|\big(U^f_{t+\varepsilon}-U^f_t\big)\varphi\big\|=0$ for
all $t\in\R$ and $\varphi\in\H$.
\end{proof}

For the next lemma, we need the following result of B. Marcus:

\begin{Lemma}[Lemma 3.1 of \cite{Mar77}]\label{Lemma_Marcus}
Let $\{\phi_s\}_{s\in\R}$ be a minimal $W^{\rm u}$ flow on $M$. Then,
$$
\lim_{s\to\infty}s^{-1}s^*(t,s,x)=\lambda^t
$$
uniformly in $x\in M$ and $t$ in a given compact interval of $\R$.
\end{Lemma}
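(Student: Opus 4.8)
The plan is to express $s^*(t,s,x)$ through the uniformly expanding parametrisation $\{\widetilde\phi_s\}_{s\in\R}$ and then to identify the limit as a uniform ergodic average. Combining $\phi_s(x)=\widetilde\phi_{\tau(x,s)}(x)$ with fact (iii), namely $f_t\circ\widetilde\phi_\sigma=\widetilde\phi_{\lambda^t\sigma}\circ f_t$, one computes
$$
\big(f_t\circ\phi_s\big)(x)=f_t\big(\widetilde\phi_{\tau(x,s)}(x)\big)
=\widetilde\phi_{\lambda^t\tau(x,s)}\big(f_t(x)\big),
\qquad
\big(\phi_{s^*(t,s,x)}\circ f_t\big)(x)
=\widetilde\phi_{\tau(f_t(x),s^*(t,s,x))}\big(f_t(x)\big).
$$
The orbits of $\{\widetilde\phi_s\}_{s\in\R}$ are the unstable manifolds of $\{f_t\}_{t\in\R}$, which are homeomorphic to $\R$, so the orbit map $\sigma\mapsto\widetilde\phi_\sigma(y)$ is injective for every $y\in M$; together with \eqref{eq_commutation} this yields the identity
\begin{equation}\label{eq_key_sstar}
\tau\big(f_t(x),s^*(t,s,x)\big)=\lambda^t\,\tau(x,s),\qquad s,t\in\R,~x\in M.
\end{equation}

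Next I would obtain an integral formula for $\tau$. Differentiating the cocycle relation $\tau(x,s+h)=\tau(x,s)+\tau\big(\phi_s(x),h\big)$ in $h$ at $h=0$ gives $\big(\partial_2\tau\big)(x,s)=\big(\partial_2\tau\big)\big(\phi_s(x),0\big)=\rho\big(\phi_s(x)\big)^{-1}$, hence
$$
\tau(x,s)=\int_0^s\frac{\d u}{\rho\big(\phi_u(x)\big)},\qquad s\in\R,~x\in M.
$$
Since $\rho\in C\big(M;(0,\infty)\big)$ one gets $s/\max(\rho)\le\tau(x,s)\le s/\min(\rho)$ for $s\ge0$ (and the reversed inequalities for $s\le0$), so each $\tau(x,\,\cdot\,)$ is an increasing homeomorphism of $\R$ onto $\R$ and $s^*(t,s,x)=\tau\big(f_t(x),\,\cdot\,\big)^{-1}\big(\lambda^t\tau(x,s)\big)$ is indeed determined by \eqref{eq_key_sstar}. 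Moreover, unique ergodicity of $\{\phi_s\}_{s\in\R}$ with respect to $\mu$ makes the ergodic averages of the continuous function $\rho^{-1}$ converge uniformly, so
$$
\lim_{s\to\infty}\frac{\tau(x,s)}s=\int_M\d\mu\,\rho^{-1}=:c\in(0,\infty)
\qquad\text{uniformly in }x\in M.
$$

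Finally I would assemble the pieces. Fix a compact interval $I\subset\R$; then $\{\lambda^t:t\in I\}$ is bounded away from $0$ and $\infty$. From \eqref{eq_key_sstar} and the bounds $s/\max(\rho)\le\tau(x,s)$ and $\tau(y,\sigma)\le\sigma/\min(\rho)$ one gets $s^*(t,s,x)\ge\frac{\min(\rho)}{\max(\rho)}\,\lambda^t\,s$, so $s^*(t,s,x)\to\infty$ as $s\to\infty$, uniformly in $x\in M$ and $t\in I$. Rewriting \eqref{eq_key_sstar} as
$$
\frac{s^*(t,s,x)}{s}
=\lambda^t\,\frac{\tau(x,s)}{s}\cdot
\frac{s^*(t,s,x)}{\tau\big(f_t(x),s^*(t,s,x)\big)}
$$
and letting $s\to\infty$, the factor $\tau(x,s)/s$ tends to $c$ uniformly in $x$, while $\tau\big(f_t(x),s^*(t,s,x)\big)/s^*(t,s,x)$ tends to $c$ uniformly (the uniform limit above applied along $\sigma=s^*(t,s,x)\to\infty$ with base point $y=f_t(x)\in M$); hence $s^{-1}s^*(t,s,x)\to\lambda^t$ uniformly in $x\in M$ and $t\in I$.

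I expect the only genuinely delicate point to be the passage to \eqref{eq_key_sstar}, i.e.\ the injectivity of $\sigma\mapsto\widetilde\phi_\sigma(y)$, which rests on the topological fact (see \cite{Mar77}) that the unstable manifolds of a codimension one Anosov flow with orientable one-dimensional unstable foliation are homeomorphic to $\R$; the rest is soft, using only the positivity and continuity of $\rho$ and the uniform convergence of ergodic averages (plus a little bookkeeping to make all limits uniform in $x$ and $t$ simultaneously). Should one wish to dispense with Assumption \ref{ass_1}, the same scheme works by applying the uniform ergodic theorem directly to the continuous additive cocycle $\tau$, which is presumably the route of \cite{Mar77}.
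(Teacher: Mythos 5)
The paper does not prove this lemma; it is imported verbatim as Lemma~3.1 of \cite{Mar77}, so there is no in-paper argument to compare yours against. On its own terms your proof is correct and cleanly organised. The pivot is the conjugacy identity $\tau\big(f_t(x),s^*(t,s,x)\big)=\lambda^t\tau(x,s)$, which you obtain by writing both sides of \eqref{eq_commutation} in the $\widetilde\phi$--time and invoking injectivity of the orbit map $\sigma\mapsto\widetilde\phi_\sigma(y)$; that injectivity is indeed unproblematic, since minimality of the $W^{\rm u}$ flow on a compact manifold of dimension $\ge 3$ rules out fixed and periodic orbits. The rest (the integral formula for $\tau$ via the cocycle relation, the uniform convergence of $\tau(x,s)/s$ to $c=\int_M\rho^{-1}\,\d\mu$ from unique ergodicity, the lower bound $s^*(t,s,x)\ge\frac{\min\rho}{\max\rho}\lambda^t s$ ensuring $s^*\to\infty$ uniformly, and the final factorisation) all check out, and the cancellation of the constant $c$ is exactly right. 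The one caveat, which you yourself flag, is that your derivation of $\tau(x,s)=\int_0^s\rho(\phi_u(x))^{-1}\,\d u$ uses Assumption~\ref{ass_1}, so as written you prove the statement only for $C^1$ minimal $W^{\rm u}$ flows, a priori weaker than the continuous case Marcus handles. Since the lemma is invoked in this paper only under Assumption~\ref{ass_1} (in the proof of Lemma~\ref{Lemma_limit}), this loss of generality is harmless here, but it is worth being explicit that your argument establishes a special case of the cited result rather than the full statement. Your closing remark that one can dispense with the $C^1$ hypothesis by applying the uniform ergodic theorem directly to the continuous additive cocycle $\tau$ is the right way to recover the full generality.
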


We also need to introduce as B. Marcus in \cite[Sec.~4]{Mar77} the following
regularity condition on the function $s^*$:

\begin{Assumption}\label{ass_2}
The derivative
$$
u_{t,s}(x):=\big(\partial_1\partial_2s^*\big)(t,s,x)
$$
exists and is continuous in $s,t\in\R$ and $x\in M$.
\end{Assumption}

\begin{Lemma}\label{Lemma_limit}
Suppose that Assumption \ref{ass_2} is satisfied. Then,
\begin{equation}\label{eq_int}
\lim_{s\to\infty}s^{-1}\big(\partial_1s^*\big)(t,s,x)
=\int_M\d\mu\,u_{t,0}
=\ln(\lambda)\hspace{1pt}\lambda^t
\end{equation}
uniformly in $x\in M$ and $t$ in a given compact interval of $\R$.
\end{Lemma}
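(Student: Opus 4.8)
The plan is to rewrite $s^{-1}(\partial_1s^*)(t,s,x)$ as a Birkhoff average of the continuous function $u_{t,0}$ along the flow $\{\phi_s\}_{s\in\R}$ and then invoke unique ergodicity. First I would record a cocycle identity for $s^*$: composing the commutation relation \eqref{eq_commutation} with itself in the $W^{\rm u}$-variable, using $\phi_{s+s'}=\phi_{s'}\circ\phi_s$ together with the injectivity of the orbit maps $r\mapsto\phi_r(z)$ (the minimal flow $\{\phi_s\}_{s\in\R}$ has no fixed points and, being minimal on a connected manifold of dimension $\geq2$, no periodic orbit), one gets
$$
s^*(t,s+s',x)=s^*(t,s,x)+s^*\big(t,s',\phi_s(x)\big),\qquad s,s',t\in\R,\ x\in M,
$$
hence $s^*(t,0,x)=0$, while \eqref{eq_commutation} at $t=0$ gives $s^*(0,s,x)=s$. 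Differentiating the cocycle identity in $s'$ at $s'=0$ yields $(\partial_2s^*)(t,s,x)=(\partial_2s^*)\big(t,0,\phi_s(x)\big)$, and a further differentiation in $t$ yields $u_{t,s}(x)=u_{t,0}\big(\phi_s(x)\big)$.

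Next I would integrate back. Since $s^*(t,0,x)=0$, the fundamental theorem of calculus gives $s^*(t,s,x)=\int_0^s(\partial_2s^*)(t,\sigma,x)\,\d\sigma$; differentiating under the integral sign in $t$ — legitimate because $u_{t,s}=\partial_1\partial_2s^*$ is jointly continuous by Assumption \ref{ass_2} — and inserting $u_{t,\sigma}(x)=u_{t,0}(\phi_\sigma(x))$ gives
$$
\big(\partial_1s^*\big)(t,s,x)=\int_0^su_{t,0}\big(\phi_\sigma(x)\big)\,\d\sigma .
$$
Thus $s^{-1}(\partial_1s^*)(t,s,x)=\tfrac1s\int_0^su_{t,0}\big(\phi_\sigma(x)\big)\,\d\sigma$ is exactly the time-$s$ ergodic average of the continuous function $u_{t,0}$ along $\{\phi_s\}_{s\in\R}$. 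By unique ergodicity of $\{\phi_s\}_{s\in\R}$ with respect to $\mu$, this converges to $\int_M\d\mu\,u_{t,0}$ uniformly in $x\in M$; uniformity in $t$ over a compact interval $K$ then follows by splitting $K$ into finitely many short subintervals and using the uniform continuity of $(t,x)\mapsto u_{t,0}(x)$ on $K\times M$ to compare with the (uniform-in-$x$) convergence at the subdivision points.

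It remains to identify the limit. Writing $s^*(t,s,x)=s^*(0,s,x)+\int_0^t(\partial_1s^*)(r,s,x)\,\d r=s+\int_0^t\!\!\int_0^su_{r,0}(\phi_\sigma(x))\,\d\sigma\,\d r$ and dividing by $s$, the inner averages converge uniformly in $x$ and $r\in[0,t]$ by the previous paragraph, so $s^{-1}s^*(t,s,x)\to1+\int_0^t\!\big(\int_M\d\mu\,u_{r,0}\big)\d r$ as $s\to\infty$. On the other hand, this limit equals $\lambda^t$ by Lemma \ref{Lemma_Marcus}. Hence $\lambda^t=1+\int_0^t\!\big(\int_M\d\mu\,u_{r,0}\big)\d r$, and differentiating in $t$ gives $\int_M\d\mu\,u_{t,0}=\ln(\lambda)\,\lambda^t$, which completes the proof. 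The only points that need care are the regularity bookkeeping — differentiating the cocycle identity twice and differentiating under the integral sign, both licensed by Assumption \ref{ass_2} — and the upgrade from uniformity in $x$ to joint uniformity in $(x,t)$, which is the routine compactness argument indicated above; I do not expect a genuine obstacle here.
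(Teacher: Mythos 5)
Your proof is correct and follows essentially the same route as the paper: both derive $(\partial_1 s^*)(t,s,x)=\int_0^s u_{t,0}(\phi_\sigma(x))\,\d\sigma$ from the cocycle equation \eqref{eq_cocycle} and the initial condition, invoke unique ergodicity to identify the limit of the resulting Birkhoff average uniformly in $(t,x)$, and then combine with Lemma~\ref{Lemma_Marcus} via an interchange of limit and $t$-derivative to obtain $\ln(\lambda)\lambda^t$. The only cosmetic differences are that you reach the integral formula by differentiating $s^*(t,s,x)=\int_0^s(\partial_2 s^*)(t,\sigma,x)\,\d\sigma$ under the integral sign rather than applying the fundamental theorem of calculus to $r\mapsto(\partial_1 s^*)(t,r,x)$ with $(\partial_1 s^*)(t,0,x)=0$, and you phrase the final interchange through the integral identity $s^{-1}s^*(t,s,x)=1+\int_0^t s^{-1}(\partial_1 s^*)(r,s,x)\,\d r$ rather than quoting the standard uniform-convergence theorem for derivatives.
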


\begin{proof}
Let $I\subset\R$ be a compact interval, and take $r,s\in\R$, $t\in I$ and $x\in M$.
Using successively the relation $\big(\partial_1s^*\big)(t,0,x)=0$, Assumption
\ref{ass_2} and the cocycle equation
\begin{equation}\label{eq_cocycle}
s^*(t,r+s,x)=s^*(t,r,x)+s^*\big(t,s,\phi_r(x)\big),
\end{equation}
we obtain
$$
\big(\partial_1s^*\big)(t,s,x)
=\int_0^s\d r\,\frac\d{\d r}\;\!\big(\partial_1s^*\big)(t,r,x)
=\int_0^s\d r\,\frac\d{\d t}\;\!\frac\d{\d s}\Big|_{s=0}\;\!s^*(t,r+s,x)
=\int_0^s\d r\,u_{t,0}\big(\phi_r(x)\big).
$$
Therefore, it follows by the unique ergodicity of $\{\phi_s\}_{s\in\R}$ that
\begin{equation}\label{eq_prem}
\lim_{s\to\infty}s^{-1}\big(\partial_1s^*\big)(t,s,x)
=\lim_{s\to\infty}s^{-1}\int_0^s\d r\,u_{t,0}\big(\phi_r(x)\big)
=\int_M\d\mu\,u_{t,0}
\end{equation}
uniformly in $x\in M$ and $t\in I$  (the uniformity in $t$ follows from the continuity
of $u_{t,0}$ in $t$). To show the second equality in \eqref{eq_int}, we note from
Lemma \ref{Lemma_Marcus} that
$$
\lim_{s\to\infty}s^{-1}s^*(t,s,x)=\lambda^t
$$
uniformly in $x\in M$ and $t\in I$. We also note from \eqref{eq_prem} that
$s^{-1}\big(\partial_1s^*\big)(t,s,x)$ converges to some function of $t$ uniformly in
$x\in M$ and $t\in I$. So, it follows by a uniform convergence argument that
$$
\lim_{s\to\infty}s^{-1}\big(\partial_1s^*\big)(t,s,x)
=\frac\d{\d t}\;\!\lim_{s\to\infty}s^{-1}s^*(t,s,x)
=\frac\d{\d t}\;\!\lambda^t
=\ln(\lambda)\hspace{1pt}\lambda^t
$$
uniformly in $x\in M$ and $t\in I$.
\end{proof}

In the following theorem, we give a short proof, inspired by \cite[Thm.~4.1]{Tie15_2},
of the strong mixing property of the flow $\{\phi_s\}_{s\in\R}$. It can be viewed as a
simplified version of the proof of B. Marcus \cite[Sec.~4]{Mar77} in the case 
Assumption \ref{ass_1} is satisfied (the proof of B. Marcus works without Assumption
\ref{ass_1}).

\begin{Theorem}[Strong mixing]\label{Theorem_mixing}
Suppose that Assumptions \ref{ass_1} and \ref{ass_2} are satisfied. Then,
$\lim_{s\to\infty}\big\langle\psi,U^\phi_s\varphi\big\rangle=0$ for all $\psi\in\H$
and $\varphi\in\ker(H_\phi)^\perp$. In particular, the flow $\{\phi_s\}_{s\in\R}$ is
strongly mixing with respect to the measure $\mu$.
\end{Theorem}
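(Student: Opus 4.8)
The plan is to reduce to a convenient dense set of vectors and then to use the commutation relation \eqref{eq_commutation} to turn the claim into an equidistribution statement along $W^{\rm u}$-orbits. Since $\{U^\phi_s\}_{s\in\R}$ is a unitary group and $\langle 1,U^\phi_s\varphi\rangle=\langle 1,\varphi\rangle$, it suffices to treat $\varphi\perp 1$, i.e.\ $\varphi\in\ker(H_\phi)^\perp$ (recall $\ker(H_\phi)=\C\hspace{1pt}1$, since unique ergodicity implies ergodicity). As $H_\phi$ is essentially self-adjoint on $C^1(M)$, the subspace $\{iX_\phi\chi\mid\chi\in C^1(M)\}=H_\phi\big(C^1(M)\big)$ is dense in $\overline{\Ran(H_\phi)}=\ker(H_\phi)^\perp$; so it is enough to prove $\lim_{s\to\infty}\langle\psi,U^\phi_s\varphi\rangle=0$ for $\psi\in C(M)$ and $\varphi=iX_\phi\chi$ with $\chi\in C^1(M)$.

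For the core of the argument, fix $t\in\R$ and use \eqref{eq_commutation} in the form $U^\phi_s U^f_t=U^f_t\,W_{t,s}$, where $W_{t,s}:=U^f_{-t}U^\phi_s U^f_t$ is the pullback operator associated with the map $x\mapsto\phi_{s^*(t,s,f_{-t}(x))}(x)$. Writing $\varphi=U^f_t\big(U^f_{-t}\varphi\big)$ and exploiting that the operators $U^f_{\pm t}$ are uniformly bounded (Lemma \ref{Lemma_strong} and \eqref{eq_norm_U^f}, which in turn rest on the measure identity \eqref{eq_mu_tilde} and the $\{f_t\}_{t\in\R}$-invariance of $\widetilde\mu$), one rewrites $\langle\psi,U^\phi_s\varphi\rangle=\big\langle(U^f_t)^*\psi,\;W_{t,s}\big(U^f_{-t}\varphi\big)\big\rangle$ as the pairing of the bounded vector $(U^f_t)^*\psi$ with the $W^{\rm u}$-motion of $U^f_{-t}\varphi$ through the point-dependent times $s^*(t,s,f_{-t}(\cdot))$. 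Two facts then drive the estimate: (i) by Lemma \ref{Lemma_Marcus} and Lemma \ref{Lemma_limit}, $s^*(t,s,x)$ and $\big(\partial_1 s^*\big)(t,s,x)$ grow like $\lambda^t s$ and $\ln(\lambda)\hspace{1pt}\lambda^t s$ respectively, uniformly in $x$; and (ii) since $f_{-t}$ contracts the $W^{\rm u}$-direction at the exponential rate $\lambda^{-t}$, the vector $U^f_{-t}\varphi=\varphi\circ f_{-t}$ becomes asymptotically invariant under $\{U^\phi_s\}_{s\in\R}$ as $t\to\infty$, in the quantitative sense that $\big\|H_\phi\big(U^f_{-t}\varphi\big)\big\|=O(\lambda^{-t})$ (here $X_\phi(\chi\circ f_{-t})=\big(\partial_2 s^*\big)(-t,0,\cdot)\,(X_\phi\chi)\circ f_{-t}$, with $\big(\partial_2 s^*\big)(-t,0,\cdot)=O(\lambda^{-t})$ uniformly by comparison with the uniformly expanding parametrisation $\{\widetilde\phi_s\}_{s\in\R}$). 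Combining (i) and (ii) with the unique ergodicity of $\{\phi_s\}_{s\in\R}$ — which ensures that Birkhoff averages $L^{-1}\int_0^L(\,\cdot\,)\circ\phi_r\,\d r$ over $W^{\rm u}$-orbits of length $L\to\infty$ converge \emph{uniformly} on $M$ to the integral against $\mu$ — one finds that $\langle\psi,U^\phi_s\varphi\rangle$ tends to $\langle\psi,1\rangle\langle 1,\varphi\rangle=0$. The last assertion of the theorem, namely strong mixing of $\{\phi_s\}_{s\in\R}$ with respect to $\mu$, follows by decomposing an arbitrary $\varphi\in\H$ along $1$ and $\ker(H_\phi)^\perp$.

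The step I expect to be the main obstacle is the bookkeeping behind (i)–(ii): the stretching times $s^*(t,s,x)$ depend on the base point $x$, so the ``stretched'' $W^{\rm u}$-motion is not a genuine one-parameter group, and the Anosov flow $\{f_t\}_{t\in\R}$ is only $C^{1+\varepsilon}$, so one cannot differentiate its generator freely. Assumption \ref{ass_2} together with Lemma \ref{Lemma_limit} are precisely what control this: the continuity and uniform asymptotics they provide for $\partial_1\partial_2 s^*$ and $\partial_1 s^*$ keep all error terms uniform in $x$ and allow one to pass cleanly from the limit $s\to\infty$ to the limit $t\to\infty$ (or to choose $t=t(s)\to\infty$ sufficiently slowly), turning what would otherwise be only a Cesàro statement into the genuine limit. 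This is where the present argument departs from — and simplifies — the original proof of B. Marcus.
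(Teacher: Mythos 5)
Your route is genuinely different from the paper's, and it has a real gap. You set up $U^\phi_s=U^f_t\,W_{t,s}\,U^f_{-t}$, observe from Lemma~\ref{Lemma_Marcus} that the point-dependent stretching time inside $W_{t,s}$ is $\approx\lambda^t s$, observe that $U^f_{-t}\varphi$ is "almost $\phi$-invariant" with rate $O(\lambda^{-t})$, and assert that these two facts together with unique ergodicity force $\langle\psi,U^\phi_s\varphi\rangle\to0$. But the two rates cancel exactly: translating $U^f_{-t}\varphi$ along a $W^{\rm u}$-orbit a parameter distance $\approx\lambda^t s$ produces a change of size $\approx(\lambda^t s)\cdot\lambda^{-t}=s$, so facts (i) and (ii) yield no smallness for \emph{any} choice of $t=t(s)$, including $t\to\infty$. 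The decay must therefore come from a genuine equidistribution mechanism for the stretched motion $W_{t,s}$, and that mechanism is precisely what your proposal postpones ("the bookkeeping") and never supplies. As written this is Marcus's \cite[Sec.~4]{Mar77} strategy restated with its hard step omitted, not a simplification of it.

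The paper's proof sidesteps the whole balance. It differentiates the commutation relation~\eqref{eq_commutation} with respect to $t$ at $t=0$ (no large-$t$ limit anywhere) to obtain the operator identity~\eqref{eq_conj},
\begin{equation*}
\big(\partial_1 s^*\big)(0,s,\cdot)\,U^\phi_s H_\phi\varphi
= iU^\phi_s X_f\varphi - iX_f U^\phi_s\varphi ,
\end{equation*}
and then uses Lemma~\ref{Lemma_limit} in the form $s^{-1}(\partial_1 s^*)(0,s,\cdot)\to\ln(\lambda)$ uniformly to replace $\ln(\lambda)$ by $s^{-1}(\partial_1 s^*)(0,s,\cdot)$ inside $\langle\rho^{-1}\psi,\ln(\lambda)U^\phi_s H_\phi\varphi\rangle$, at the cost of a vanishing error. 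Substituting~\eqref{eq_conj} then puts an explicit $s^{-1}$ in front of the right-hand side; the term $s^{-1}\langle\rho^{-1}\psi,iU^\phi_s X_f\varphi\rangle$ is $O(s^{-1})$ outright, and for the other term the symmetricity of $i\rho^{-1}X_f$ on $C^1(M)$ (a consequence of the $\{f_t\}$-invariance of $\widetilde\mu=\mu/\widetilde\rho$) moves $X_f$ onto $\psi$, making it $O(s^{-1})$ as well. Both proofs begin by reducing to $\varphi\in H_\phi\,C^1(M)$, but the resemblance ends there: the paper's argument is a short direct computation with a built-in $s^{-1}$ factor, while yours would need the full Marcus equidistribution argument to close.
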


\begin{proof}
Take $\varphi\in C^1(M)$, $s,t\in\R$ and $x\in M$. Then, we know from
\eqref{eq_commutation} that
$$
\big(\varphi\circ f_t\circ\phi_s\big)(x)
=\big(\varphi\circ\phi_{s^*(t,s,x)}\circ f_t\big)(x).
$$
Applying the derivative $\frac\d{\d t}\big|_{t=0}$ and using the relation
$s^*(0,s,x)=s$, we obtain
\begin{align*}
&\big(X_f\varphi\big)\big(\phi_s(x)\big)\\
&=\frac\d{\d t}\Big|_{t=0}\big(\varphi\circ\phi_{s^*(t,s,x)}\circ f_t\big)(x)\\
&=\lim_{\varepsilon\to0}\varepsilon^{-1}
\big\{\big(\varphi\circ\phi_{s^*(\varepsilon,s,x)}\circ f_\varepsilon\big)(x)
-\big(\varphi\circ\phi_{s^*(0,s,x)}\circ f_\varepsilon\big)(x)\big\}\\
&\quad+\lim_{\varepsilon\to0}\varepsilon^{-1}
\big\{\big(\varphi\circ\phi_s\circ f_\varepsilon\big)(x)
-\big(\varphi\circ\phi_s\circ f_0\big)(x)\big\}\\
&=\big(\partial_1s^*\big)(0,s,x)\cdot\big(X_\phi\varphi\big)\big(\phi_s(x)\big)
+\big(X_f(\varphi\circ\phi_s)\big)(x),
\end{align*}
which is equivalent to
\begin{equation}\label{eq_conj}
\big(\partial_1s^*\big)(0,s,\;\!\cdot\,)\;\!U^\phi_sH_\phi\varphi
=iU^\phi_sX_f\varphi-iX_fU^\phi_s\varphi.
\end{equation}
So, for $\psi\in C^1(M)$, we infer from Lemma \ref{Lemma_limit} that
\begin{align*}
&\lim_{s\to\infty}\big\langle\rho^{-1}\psi,U^\phi_sH_\phi\varphi\big\rangle\\
&=\ln(\lambda)^{-1}\lim_{s\to\infty}
\big\langle\rho^{-1}\psi,\ln(\lambda)U^\phi_sH_\phi\varphi\big\rangle\\
&=\ln(\lambda)^{-1}\lim_{s\to\infty}\big\langle\rho^{-1}\psi,
s^{-1}\big(\partial_1s^*\big)(0,s,\;\!\cdot\,)\;\!U^\phi_sH_\phi\varphi\big\rangle\\
&=\ln(\lambda)^{-1}\lim_{s\to\infty}s^{-1}
\big\langle\rho^{-1}\psi,iU^\phi_sX_f\varphi\big\rangle
-\ln(\lambda)^{-1}\lim_{s\to\infty}s^{-1}
\big\langle\rho^{-1}\psi,iX_fU^\phi_s\varphi\big\rangle\\
&=0-\ln(\lambda)^{-1}\lim_{s\to\infty}s^{-1}
\big\langle\psi,i\rho^{-1}X_fU^\phi_s\varphi\big\rangle.
\end{align*}
But, the operator $i\rho^{-1}X_f$ is symmetric on $C^1(M)$. So,
$$
\lim_{s\to\infty}s^{-1}\big\langle\psi,i\rho^{-1}X_fU^\phi_s\varphi\big\rangle
=\lim_{s\to\infty}s^{-1}\big\langle i\rho^{-1}X_f\psi,U^\phi_s\varphi\big\rangle
=0,
$$
and thus
\begin{equation}\label{eq_strong_1}
\lim_{s\to\infty}\big\langle\rho^{-1}\psi,U^\phi_sH_\phi\varphi\big\rangle=0.
\end{equation}
Moreover, the set $\rho^{-1}C^1(M)$ is dense in $\H$ because $C^1(M)$ is dense in $\H$
and $\rho^{-1}:\H\to\H$ is an homeomorphism, and the set $H_\phi C^1(M)$ is dense in
$\ker(H_\phi)^\perp$ because $C^1(M)$ is a core for $H_\phi$ and $H_\phi\dom(H_\phi)$
is dense in $\ker(H_\phi)^\perp$. Therefore, \eqref{eq_strong_1} implies that
$\lim_{s\to\infty}\big\langle\psi,U^\phi_s\varphi\big\rangle=0$ for all $\psi\in\H$
and $\varphi\in\ker(H_\phi)^\perp$.
\end{proof}

Under Assumptions \ref{ass_1} and \ref{ass_2}, the result of Theorem
\ref{Theorem_mixing} applies in particular to the case of reparametrisations of
classical horocycle flows on the unit tangent bundle of compact connected orientable
surfaces of constant negative curvature (see \cite[Cor.~4.2]{Mar77}).

%--------------------------------------------------------------------------------------
\section{Absolutely continuous spectrum}\label{Sec_spectrum}
\setcounter{equation}{0}
%--------------------------------------------------------------------------------------

We know from Theorem \ref{Theorem_mixing} that, under Assumptions \ref{ass_1} and
\ref{ass_2}, the flow $\{\phi_s\}_{s\in\R}$ is strongly mixing with respect to the
measure $\mu$. Therefore, its generator $H_\phi$ has purely continuous spectrum in
$\R\setminus\{0\}$. Our goal in this section is to show that the spectrum of $H_\phi$
is even {\em purely absolutely continuous} in $\R\setminus\{0\}$ under some additional
regularity assumption. For this, we first need to recall some results on commutator
methods borrowed from \cite{ABG96,Sah97_2} (see also the original paper \cite{Mou81}
of \'E. Mourre).

%--------------------------------------------------------------------------------------
\subsection{Commutators and regularity classes}\label{Sec_Commutators}
%--------------------------------------------------------------------------------------

Let $\H$ be a Hilbert space with scalar product
$\langle\;\!\cdot\;\!,\;\!\cdot\;\!\rangle$ antilinear in the first argument, denote
by $\B(\H)$ the set of bounded linear operators on $\H$, and write $\|\;\!\cdot\;\!\|$
both for the norm on $\H$ and the norm on $\B(\H)$. Let $A$ be a self-adjoint operator
in $\H$ with domain $\dom(A)$, and take $S\in\B(\H)$. For any $k\in\N$, we say that
$S$ belongs to $C^k(A)$, with notation $S\in C^k(A)$, if the map
\begin{equation}\label{eq_group}
\R\ni t\mapsto\e^{-itA}S\e^{itA}\in\B(\H)
\end{equation}
is strongly of class $C^k$. In the case $k=1$, one has $S\in C^1(A)$ if and only if
the quadratic form
$$
\dom(A)\ni\varphi\mapsto\big\langle\varphi,SA\hspace{1pt}\varphi\big\rangle
-\big\langle A\hspace{1pt}\varphi,S\varphi\big\rangle\in\C
$$
is continuous for the topology induced by $\H$ on $\dom(A)$. We denote by $[S,A]$ the
bounded operator associated with the continuous extension of this form, or
equivalently $-i$ times the strong derivative of the function \eqref{eq_group} at
$t=0$.

If $H$ is a self-adjoint operator in $\H$ with domain $\dom(H)$ and spectrum
$\sigma(H)$, we say that $H$ is of class $C^k(A)$ if $(H-z)^{-1}\in C^k(A)$ for some
$z\in\C\setminus\sigma(H)$. So, $H$ is of class $C^1(A)$ if and only if the quadratic
form
$$
\dom(A)\ni\varphi\mapsto
\big\langle\varphi,(H-z)^{-1}A\hspace{1pt}\varphi\big\rangle
-\big\langle A\hspace{1pt}\varphi,(H-z)^{-1}\varphi\big\rangle\in\C
$$
extends continuously to a bounded form defined by the operator
$[(H-z)^{-1},A]\in\B(\H)$. In such a case, the set $\dom(H)\cap\dom(A)$ is a core for
$H$ and the quadratic form
$$
\dom(H)\cap\dom(A)\ni\varphi\mapsto\big\langle H\varphi,A\hspace{1pt}\varphi\big\rangle
-\big\langle A\hspace{1pt}\varphi,H\varphi\big\rangle\in\C
$$
is continuous in the topology of $\dom(H)$ \cite[Thm.~6.2.10(b)]{ABG96}. This form
then extends uniquely to a continuous quadratic form on $\dom(H)$ which can be
identified with a continuous operator $[H,A]$ from $\dom(H)$ to the adjoint space
$\dom(H)^*$. In addition, the following relation holds in $\B(\H)$:
\begin{equation}\label{eq_resolvent}
\big[(H-z)^{-1},A\big]=-(H-z)^{-1}[H,A](H-z)^{-1}.
\end{equation}

Let $E^H(\hspace{1pt}\cdot\hspace{1pt})$ denote the spectral measure of the
self-adjoint operator $H$, and assume that $H$ is of class $C^1(A)$. If there exist a
Borel set $I\subset\R$, a number $a>0$ and a compact operator $K\in\B(\H)$ such that
\begin{equation}\label{Mourre_H}
E^H(I)[\hspace{1pt}iH,A]E^H(I)\ge aE^H(I)+K,
\end{equation}
then one says that $H$ satisfies a Mourre estimate on $I$ and that $A$ is a conjugate
operator for $H$ on $I$. Also, one says that $H$ satisfies a strict Mourre estimate on
$I$ if \eqref{Mourre_H} holds with $K=0$. One of the consequences of a Mourre estimate
is to imply spectral results for $H$ on $I$. We recall here these spectral results in
the case where $H$ is of class $C^2(A)$ (see \cite[Sec.~7.1.2]{ABG96} and
\cite[Thm.~0.1]{Sah97_2} for more details).

\begin{Theorem}\label{thm_absolute}
Let $H$ and $A$ be self-ajoint operators in a Hilbert space $\H$, with $H$ of class
$C^2(A)$. Suppose there exist a bounded Borel set $I\subset\R$, a number $a>0$ and a
compact operator $K\in\B(\H)$ such that
\begin{equation}\label{Mourre_H_bis}
E^H(I)[\hspace{1pt}iH,A]E^H(I)\ge aE^H(I)+K.
\end{equation}
Then, $H$ has at most finitely many eigenvalues in $I$, each one of finite
multiplicity, and $H$ has no singular continuous spectrum in $I$. Furthermore, if
\eqref{Mourre_H_bis} holds with $K=0$, then $H$ has only purely absolutely continuous
spectrum in $I$ (no singular spectrum).
\end{Theorem}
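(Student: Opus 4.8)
The statement is the standard output of Mourre's commutator method, so rather than reprove it from scratch I would deduce it from the abstract theory of \cite{ABG96,Sah97_2}; what follows is the structure of that argument. I would begin with the purely algebraic \emph{virial theorem}: if $H\varphi=\lambda\varphi$ with $\lambda\in I$, then (using that $H$ is of class $C^1(A)$, so that the virial identity is meaningful on eigenvectors) $\langle\varphi,[iH,A]\varphi\rangle=0$. Sandwiching \eqref{Mourre_H_bis} between $\varphi$ and itself and using $E^H(I)\varphi=\varphi$ gives $0\ge a\|\varphi\|^2+\langle\varphi,K\varphi\rangle$. If $K=0$ this forces $\varphi=0$, so $H$ has no eigenvalue in $I$; if $K$ is merely compact, then an infinite-dimensional eigenspace of $H$ inside $I$ would carry a compact operator bounded below by $a$, which is impossible, so the point spectrum of $H$ in $I$ is finite and of finite multiplicity.

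The analytic core, and the step I expect to be the main obstacle, is the \emph{limiting absorption principle}. I would fix a closed subinterval $I'\subset I$ avoiding the finitely many eigenvalues, and first upgrade the Mourre estimate to a \emph{strict} one on a neighbourhood $J$ of each point of $I'$: since $H$ has no eigenvalue there, the compact term $K$ can be absorbed by shrinking the spectral projection $E^H(J)$, giving $E^H(J)[iH,A]E^H(J)\ge a'E^H(J)$ with $a'>0$. Then, working with the regularised resolvents $\langle A\rangle^{-s}(H-z)^{-1}\langle A\rangle^{-s}$ for $s>1/2$ (where $\langle A\rangle:=(1+A^2)^{1/2}$), I would establish a Gronwall-type differential inequality in $\im z$ whose closure uses precisely the boundedness of the second commutator, that is, the $C^2(A)$ hypothesis (more precisely the weaker $C^{1,1}(A)$ class that $C^2(A)$ implies and with which \cite{Sah97_2} works). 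This yields $\sup_{\re z\in I',\,\im z\ne0}\big\|\langle A\rangle^{-s}(H-z)^{-1}\langle A\rangle^{-s}\big\|<\infty$ and, in fact, continuity of the boundary values down to $\im z=0$.

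Finally, I would invoke the standard criterion: if $z\mapsto\langle\psi,(H-z)^{-1}\psi\rangle$ extends continuously to $\{\im z\ge0,\ \re z\in I'\}$ for $\psi$ in a dense set, then the spectral measure of $H$ on $I'$ is purely absolutely continuous. Covering $I$ minus the finitely many eigenvalues by intervals $I'$ of this type gives the absence of singular continuous spectrum on $I$; when $K=0$ there are no eigenvalues either, so $H$ is purely absolutely continuous on $I$. The only genuinely hard part is the limiting-absorption step, whose proof is the technical heart of Mourre theory and the reason the stronger $C^2(A)$ regularity (rather than just $C^1(A)$) is assumed.
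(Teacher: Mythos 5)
Your proposal is correct and matches the paper's approach: the paper does not prove Theorem~\ref{thm_absolute} but recalls it from \cite[Sec.~7.1.2]{ABG96} and \cite[Thm.~0.1]{Sah97_2}, and your outline (virial theorem for finiteness of eigenvalues, absorption of the compact term to get a strict Mourre estimate away from eigenvalues, limiting absorption principle via a differential inequality for the regularised resolvent using the $C^{1,1}(A)\supset C^2(A)$ regularity, and the standard criterion for absolute continuity) is an accurate summary of exactly the proof contained in those references.
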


%--------------------------------------------------------------------------------------
\subsection{Absolutely continuous spectrum}\label{Sec_Absolute}
%--------------------------------------------------------------------------------------

We show in this section that the spectrum of $H_\phi$ is purely absolutely continuous
in $\R\setminus\{0\}$ under some additional regularity assumption. We start with two
technical lemmas.

\begin{Lemma}\label{Lemma_form_u}
Suppose that Assumptions \ref{ass_1} and \ref{ass_2} are satisfied.
\begin{enumerate}
\item[(a)] If $s\in\R$ and $\varphi\in C^1(M)$, then
$
U^\phi_sX_fU^\phi_{-s}\varphi
=X_f\varphi+\int_0^s\d r\,\big(u_{0,0}\circ\phi_r\big)X_\phi\varphi
$.
\item[(b)] If $z\in\C\setminus\R$ and $\varphi\in C^1(M)$, then
$X_f\big(H_\phi-z\big)^{-1}\varphi\in\H$.
\item[(c)] If $X_f(\rho)\in C(M)$, then $u_{0,0}=\ln(\lambda)+\rho^{-1}X_f(\rho)$.
\end{enumerate}
\end{Lemma}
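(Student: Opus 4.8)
The plan is to derive all three statements from the commutation relation \eqref{eq_commutation} and the infinitesimal identity \eqref{eq_conj}, plus the cocycle properties of $s^*$. For part (a), I would start from the conjugated operator $U^\phi_sX_fU^\phi_{-s}$ acting on $\varphi\in C^1(M)$ and differentiate in the parameter. Concretely, set $F(s):=U^\phi_sX_fU^\phi_{-s}\varphi$ and compute $\frac{\d}{\d s}F(s)$. Using \eqref{eq_conj} with the roles suitably arranged — taking the $t$-derivative at $t=0$ of $\varphi\circ f_t$ conjugated by $\phi_s$ — one gets that $\frac{\d}{\d s}F(s) = (u_{0,0}\circ\phi_s)\,U^\phi_s X_\phi U^\phi_{-s}\varphi$; but since $U^\phi_s X_\phi U^\phi_{-s}\varphi = X_\phi\varphi$ (the vector field $X_\phi$ generates the flow $\phi_s$, so its pushforward by $\phi_s$ is itself), this reduces to $\frac{\d}{\d s}F(s) = (u_{0,0}\circ\phi_s)\,X_\phi\varphi$. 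Here $u_{0,0}=(\partial_1\partial_2 s^*)(0,0,\cdot)$ appears because $(\partial_1 s^*)(0,s,x)$ differentiated through the cocycle identity \eqref{eq_cocycle} at $t=0$ gives $\int_0^s (u_{0,0}\circ\phi_r)\,\d r$, exactly as in the proof of Lemma \ref{Lemma_limit}. Integrating from $0$ to $s$ and using $F(0)=X_f\varphi$ gives the claimed formula. The one subtlety is justifying differentiating the operator-valued expression applied to a fixed $C^1$ function; since everything is evaluated pointwise on $M$ and the relevant functions $(t,s,x)\mapsto s^*(t,s,x)$ are $C^1$ by Assumption \ref{ass_2} together with Assumption \ref{ass_1}, this is a legitimate pointwise computation.

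For part (b), the point is that $X_f(H_\phi-z)^{-1}\varphi$ is a priori only a distribution, and I must show it lies in $\H=\ltwo(M,\mu)$. The strategy is to write $(H_\phi-z)^{-1}\varphi$ as $-i\int_0^\infty \e^{izs}\,U^\phi_{-s}\varphi\,\d s$ (for $\im z>0$; the other half-plane is symmetric), then apply $X_f$ formally and use part (a) to control $X_f U^\phi_{-s}\varphi = U^\phi_{-s}(U^\phi_s X_f U^\phi_{-s}\varphi) = U^\phi_{-s}\big(X_f\varphi + \int_0^s(u_{0,0}\circ\phi_r)X_\phi\varphi\,\d r\big)$. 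The norm of this grows at most linearly in $s$ (since $u_{0,0}$ and $X_\phi\varphi$ are bounded, being continuous on the compact $M$, and $U^\phi_{-s}$ is unitary), so the integral $\int_0^\infty \e^{izs}\,(\cdots)\,\d s$ converges absolutely in $\H$ thanks to the exponential decay $\e^{-(\im z)s}$ dominating linear growth. One then checks that the resulting $\H$-valued integral equals $X_f(H_\phi-z)^{-1}\varphi$ in the sense of distributions, which identifies the distribution with an $\ltwo$ function.

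For part (c), this is a direct computation relating $u_{0,0}$ to the expanding parametrisation. Under the hypothesis $X_f(\rho)\in C(M)$, I would use the relation between $\{\widetilde\phi_s\}$ and $\{\phi_s\}$, namely $\phi_s(x)=\widetilde\phi_{\tau(x,s)}(x)$ with $\rho=1/(\partial_2\tau)(\cdot,0)$, together with the uniform expansion $f_t\circ\widetilde\phi_s=\widetilde\phi_{\lambda^t s}\circ f_t$. Translating the latter into the $\{\phi_s\}$ picture produces an explicit formula for $s^*$ in terms of $\tau$, $\lambda$, and $f_t$; differentiating once in $s$ and once in $t$ at $(0,0)$ and simplifying using $\tau(x,0)=0$ and the chain rule for $X_f$ acting on $\rho$ yields $u_{0,0}=\ln(\lambda)+\rho^{-1}X_f(\rho)$. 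Alternatively, and perhaps more cleanly, one can combine part (a) with Lemma \ref{Lemma_limit}: integrating the identity in (a) against $\d\mu$, using invariance of $\widetilde\mu$ and \eqref{eq_mu_tilde}, pins down $\int_M u_{0,0}\,\d\mu = \ln\lambda$, and a local version of the same argument applied along a single orbit identifies the non-constant part as $\rho^{-1}X_f(\rho)$.

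The main obstacle I anticipate is part (b): the careful justification that the formal expression $X_f(H_\phi-z)^{-1}\varphi$ genuinely defines an element of $\H$ rather than merely a distribution, and that the Bochner integral representation is valid. The linear-growth bound from part (a) is the key input that makes this work, so parts (a) and (b) are logically linked and must be done in that order; part (c) is then essentially bookkeeping, with the only mild care needed being the continuity hypothesis $X_f(\rho)\in C(M)$, which is exactly what ensures $u_{0,0}$ is continuous and the manipulations in (a) and (c) are legitimate.
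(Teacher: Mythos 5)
Your treatment of parts (a) and (b) is essentially the paper's proof. In (a), differentiating $F(s)=U^\phi_s X_f U^\phi_{-s}\varphi$ and invoking $u_{0,s}=u_{0,0}\circ\phi_s$ from the cocycle \eqref{eq_cocycle} is the derivative form of the paper's direct reading of \eqref{eq_conj}; both are correct and equivalent. In (b) the paper makes your final ``one then checks'' precise by writing $t^{-1}(U^f_t-1)=t^{-1}\int_0^t\d s\,U^f_sX_f$, inserting the identity of part (a), invoking the uniform bound $\|U^f_s\|\le\sqrt{\max(\rho)/\min(\rho)}$ from \eqref{eq_norm_U^f} and the exponential decay in $r$, and passing to the limit by Lebesgue dominated convergence; your sketch should be completed along those lines. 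One small slip: with the convention $H_\phi=\slim is^{-1}(U^\phi_s-1)$ one has $(H_\phi-z)^{-1}\varphi=i\int_0^\infty\d r\,\e^{irz}U^\phi_r\varphi$ for $\im z>0$, whereas $-i\int_0^\infty\e^{izs}U^\phi_{-s}\varphi\,\d s$ equals $(H_\phi+z)^{-1}\varphi$.

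For part (c) your first argument is a genuinely different and more elementary route than the one in the paper, and it works. From $\phi_s(x)=\widetilde\phi_{\tau(x,s)}(x)$ and $f_t\circ\widetilde\phi_s=\widetilde\phi_{\lambda^ts}\circ f_t$ one gets the implicit identity $\tau\big(f_t(x),s^*(t,s,x)\big)=\lambda^t\tau(x,s)$; differentiating in $s$ at $s=0$ and using $\rho=1/(\partial_2\tau)(\cdot,0)$ gives $(\partial_2 s^*)(t,0,x)=\lambda^t\rho(f_t(x))/\rho(x)$, and then differentiating in $t$ at $t=0$ (legitimate under Assumption~\ref{ass_2} and the hypothesis $X_f(\rho)\in C(M)$) yields $u_{0,0}=\ln(\lambda)+\rho^{-1}X_f(\rho)$ pointwise and directly. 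The paper instead argues in $\ltwo(M,\mu)$, following L.~W.~Green: it computes $\langle X_\phi\psi,X_f\varphi\rangle$ via the expanding parametrisation and the symmetry of $i\rho^{-1}X_f$, obtains the weak identity $\langle\psi,(u_{0,0}-\ln\lambda-\rho^{-1}X_f(\rho))\varphi\rangle=0$ by density, shows this function lies in $\ker(H_\phi)^\perp$ using Lemma~\ref{Lemma_limit} and ergodicity, tests it against itself to get $\int_M(\cdots)^2\,\d\mu=0$, and finally upgrades to a pointwise identity by continuity. Your computation is shorter and avoids this $\ltwo$ bookkeeping; the paper's avoids appealing to the explicit relation between $s^*$ and $\tau$. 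Your ``alternative'' second route for (c) is too vague to carry through as stated: knowing $\int_M u_{0,0}\,\d\mu=\ln\lambda$ does not determine $u_{0,0}$ pointwise, and the ``local version along a single orbit'' is not spelled out, so you should discard it in favour of your first argument.
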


\begin{proof}
(a) Take $s\in\R$ and $\varphi\in C^1(M)$. Then, \eqref{eq_conj}, the identity
$\big(\partial_1s^*\big)(0,0,\;\!\cdot\,)\equiv0$ and Assumption \ref{ass_2} imply
\begin{equation}\label{eq_integral}
U^\phi_sX_fU^\phi_{-s}\varphi
=X_f\varphi+\big(\partial_1s^*\big)(0,s,\;\!\cdot\,)X_\phi\varphi
=X_f\varphi+\int_0^s\d r\,u_{0,r}X_\phi\varphi.
\end{equation}
On another hand, the cocycle equation \eqref{eq_cocycle} implies for all $r,s,t\in\R$
that
$$
s^*(t,s,\;\!\cdot\,)
=s^*\big(t,s-r,\phi_r(\;\!\cdot\;\!)\big)-s^*\big(t,-r,\phi_r(\;\!\cdot\;\!)\big)
=U^\phi_r\big(s^*(t,s-r,\;\!\cdot\,)-s^*\big(t,-r,\;\!\cdot\,)\big)U^\phi_{-r}.
$$
Deriving with respect to $t$ and $s$, we thus obtain
$$
u_{t,s}=U^\phi_ru_{t,s-r}U^\phi_{-r}=u_{t,s-r}\circ\phi_r.
$$
In particular, we have $u_{0,r}=u_{0,0}\circ\phi_r$, and the claim follows from
\eqref{eq_integral}.

(b) We give the proof in the case $\im(z)>0$ since the case $\im(z)<0$ is analogous.
Take $\varphi\in C^1(M)$. Then, the formula
$\big(H_\phi-z\big)^{-1}\varphi=i\int_0^\infty\d r\,\e^{irz}U^\phi_r\varphi$ and point
(a) imply
\begin{align}
&X_f\big(H_\phi-z\big)^{-1}\varphi\nonumber\\
&=i\lim_{t\to0}\int_0^\infty\d r\,\e^{irz}t^{-1}\big(U^f_t-1\big)U^\phi_r\varphi
\nonumber\\
&=i\lim_{t\to0}\int_0^\infty\d r\,\e^{irz}t^{-1}\int_0^t\d s\,U^f_sX_fU^\phi_r\varphi
\nonumber\\
&=i\lim_{t\to0}\int_0^\infty\d r\,\e^{irz}t^{-1}\int_0^t\d s\,U^f_sU^\phi_r
\left(X_f+\int_0^{-r}\d q\,\big(u_{0,0}\circ\phi_q\big)X_\phi\right)\varphi.\label{eq_lebesgue}
\end{align}
Now, we know from \eqref{eq_norm_U^f} that
$\big\|U^f_s\big\|\le\sqrt{\frac{\max(\rho)}{\min(\rho)}}$ for all $s\in\R$. Thus, we
have
\begin{align*}
&\left\|\hspace{1pt}\e^{irz}t^{-1}\int_0^t\d s\,U^f_sU^\phi_r
\left(X_f+\int_0^{-r}\d q\,\big(u_{0,0}\circ\phi_q\big)X_\phi\right)\varphi\right\|\\
&\le\e^{-r\im(z)}\sqrt{\frac{\max(\rho)}{\min(\rho)}}
\;\!\big(\big\|X_f\varphi\big\|+r\hspace{1pt}\|u_{0,0}\|_{\linf(X,\mu)}\big\|
X_\phi\varphi\big\|\big)\\
&\in\lone\big([0,\infty),\d r\big),
\end{align*}
and we can apply Lebesgue dominated convergence theorem to \eqref{eq_lebesgue} to
obtain
\begin{align*}
&X_f\big(H_\phi-z\big)^{-1}\varphi\\
&=i\int_0^\infty\d r\,\e^{irz}\lim_{t\to0}t^{-1}\int_0^t\d s\,U^f_sU^\phi_r
\left(X_f+\int_0^{-r}\d q\,\big(u_{0,0}\circ\phi_q\big)X_\phi\right)\varphi\\
&=i\int_0^\infty\d r\,\e^{irz}U^\phi_r
\left(X_f+\int_0^{-r}\d q\,\big(u_{0,0}\circ\phi_q\big)X_\phi\right)\varphi\\
&\in\H.
\end{align*}

(c) The proof is inspired by a result of L. W. Green in the case of
the classical horocycle flows on the unit tangent bundle of compact connected
orientable surfaces of negative curvature (see \cite[Eq.~(3.3) \& Lemma~3.3]{Gre78}).
Take $\psi,\varphi\in C^1(M)$. Then, the facts that
$\{\widetilde\phi_s\}_{s\in\R}$ has vector field $\rho X_\phi$, that 
$\{\widetilde\phi_s\}_{s\in\R}$ and $\{f_t\}_{t\in\R}$ preserve the measure
$\widetilde\mu=\mu/\widetilde\rho$ and that
$f_t\circ\widetilde\phi_{-s}=\widetilde\phi_{-\lambda^ts}\circ f_t$ imply
\begin{align*}
\big\langle X_\phi\psi,X_f\varphi\big\rangle
&=\frac\d{\d t}\Big|_{t=0}\;\!\frac\d{\d s}\Big|_{s=0}
\big\langle\rho^{-1}\big(\psi\circ{\widetilde\phi_s}\big),
\varphi\circ f_t\big\rangle\\
&=\frac\d{\d t}\Big|_{t=0}\;\!\frac\d{\d s}\Big|_{s=0}
\big\langle\rho^{-1}\psi,\varphi\circ f_t\circ{\widetilde\phi_{-s}}\big\rangle\\
&=\frac\d{\d t}\Big|_{t=0}\;\!\frac\d{\d s}\Big|_{s=0}\big\langle\rho^{-1}\psi,
\varphi\circ\widetilde\phi_{-\lambda^ts}\circ f_t\big\rangle\\
&=-\frac\d{\d t}\Big|_{t=0}
\big\langle\rho^{-1}\psi,\lambda^t\big(\rho X_\phi\varphi\big)\circ f_t\big\rangle\\
&=-\frac\d{\d t}\Big|_{t=0}
\big\langle\psi\circ f_{-t},\lambda^tX_\phi\varphi\big\rangle\\
&=\big\langle X_f\psi,X_\phi\varphi\big\rangle
-\big\langle\psi,\ln(\lambda)X_\phi\varphi\big\rangle.
\end{align*}
On another hand, point (a) and the symmetricity of $i\rho^{-1}X_f$ on $C^1(M)$ imply
\begin{align*}
&\big\langle\psi,u_{0,0}X_\phi\varphi\big\rangle\\
&=\frac\d{\d s}\Big|_{s=0}
\big\langle\rho U^\phi_{-s}\psi,\rho^{-1}X_fU^\phi_{-s}\varphi\big\rangle\\
&=-\big\langle X_\phi\psi,X_f\varphi\big\rangle
+\big\langle\rho^{-1}X_f\rho\hspace{1pt}\psi,X_\phi\varphi\big\rangle\\
&=-\big\langle X_\phi\psi,X_f\varphi\big\rangle
+\big\langle X_f\psi,X_\phi\varphi\big\rangle
+\big\langle\psi,\rho^{-1}X_f(\rho)X_\phi\varphi\big\rangle.
\end{align*}
Combining the two relations, we thus obtain
$$
\big\langle\psi,\big(u_{0,0}-\ln(\lambda)
-\rho^{-1}X_f(\rho)\big)X_\phi\varphi\big\rangle
=0,
$$
and we infer from the density of $C^1(M)$ in $\H$ and the density of $X_\phi C^1(M)$
in $\ker(H_\phi)^\perp$ that
\begin{equation}\label{prod_scal}
\big\langle\psi,\big(u_{0,0}-\ln(\lambda)-\rho^{-1}X_f(\rho)\big)\varphi\big\rangle
=0
\end{equation}
for all $\psi\in\H$ and $\varphi\in\ker(H_\phi)^\perp$. Now, Lemma \ref{Lemma_limit},
the fact that $\{f_t\}_{t\in\R}$ preserves $\widetilde\mu=\mu/\widetilde\rho$, and the
ergodicity of $\{\phi_s\}_{s\in\R}$ imply that
$$
u_{0,0}-\ln(\lambda)-\rho^{-1}X_f(\rho)
\in\left\{\varphi\in\H\mid\int_M\d\mu\,\varphi=0\right\}
=\ker(H_\phi)^\perp.
$$
So, we can set $\psi=1$ and $\varphi=u_{0,0}-\ln(\lambda)-\rho^{-1}X_f(\rho)$ in
\eqref{prod_scal} to get
$$
\int_M\d\mu\,\big(u_{0,0}-\ln(\lambda)-\rho^{-1}X_f(\rho)\big)^2=0,
$$
and then infer that $u_{0,0}-\ln(\lambda)-\rho^{-1}X_f(\rho)\equiv0$ by the continuity
of $u_{0,0}$ and $\rho^{-1}X_f(\rho)$.
\end{proof}

\begin{Lemma}\label{Lemma_essential}
Let $X$ be a $C^1$ vector field on $M$ and $g\in C^1(M;\R)$. Assume that the operator
$$
A\varphi:=i\hspace{0.1pt}(X+g)\varphi,\quad\varphi\in C^1(M),
$$
is symmetric in $\H$. Then, $A$ is essentially self-adjoint in $\H$.
\end{Lemma}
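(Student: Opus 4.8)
The plan is to realise $\overline A$ as the self-adjoint generator of an explicit strongly continuous unitary group on $\H$. Since $X$ is a $C^1$ vector field on the compact manifold $M$, it is complete and generates a $C^1$ flow $\{F_t\}_{t\in\R}$ of diffeomorphisms of $M$. First I would set $G_t:=\int_0^t\d r\,(g\circ F_{-r})$; since $(r,x)\mapsto F_{-r}(x)$ is $C^1$, one gets $G_t\in C^1(M;\R)$ together with the cocycle identity $G_{t+s}=G_s+G_t\circ F_{-s}$. Then I would define
\[
V_t\varphi:=\e^{-G_t}\hspace{1pt}(\varphi\circ F_{-t}),\qquad t\in\R,~\varphi\in\H.
\]
The cocycle identity gives the group law $V_sV_t=V_{s+t}$ and $V_0=1$, and since $F_{-t}$ is a $C^1$ diffeomorphism and $G_t\in C^1(M)$ one has $V_t\hspace{1pt}C^1(M)\subseteq C^1(M)$ for all $t\in\R$. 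A direct computation using $\frac\d{\d t}\big|_{t=0}G_t=g$ and $\frac\d{\d t}\big|_{t=0}(\varphi\circ F_{-t})=-X\varphi$ shows that $\lim_{t\to0}t^{-1}(V_t\varphi-\varphi)=-(X+g)\varphi$ for all $\varphi\in C^1(M)$.

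The key step is to show that each $V_t$ is unitary, and this is precisely where the symmetry hypothesis on $A$ is used. Expanding $\langle A\varphi,\psi\rangle=\langle\varphi,A\psi\rangle$ with $\varphi\equiv1$ and $\psi\in C^1(M)$ arbitrary, and using that $g$ is real, shows that the symmetry of $A$ implies
\[
\int_M\d\mu\,X(u)=-2\int_M\d\mu\,gu\qquad\text{for all }u\in C^1(M).
\]
Now I would fix $\varphi\in C^1(M)$, put $\psi:=|\varphi|^2\in C^1(M;\R)$, and study $Q(t):=\big\|V_t\varphi\big\|^2=\int_M\d\mu\,\e^{-2G_t}\hspace{1pt}(\psi\circ F_{-t})$. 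Differentiating under the integral sign and using $\frac\d{\d t}G_t=g\circ F_{-t}$, the identity $X(\chi\circ F_{-t})=(X\chi)\circ F_{-t}$ (which holds because the flow of $X$ preserves $X$), and the elementary relation $X(G_t)=g-g\circ F_{-t}$ (obtained by differentiating $g\circ F_{-r}$ along the flow), one reduces $Q'(t)$ to an integral to which the displayed identity applies with $u:=\e^{-2G_t}(\psi\circ F_{-t})\in C^1(M)$; after this substitution all terms cancel and $Q'(t)=0$. Hence $\|V_t\varphi\|=\|\varphi\|$ for every $\varphi\in C^1(M)$, so $V_t$ extends to an isometry of $\H$, and since $V_{-t}$ is a two-sided inverse, $V_t$ is unitary.

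Finally I would invoke Stone's theorem. Strong continuity of $\{V_t\}_{t\in\R}$ follows from the group law together with $\lim_{t\to0}\|V_t\varphi-\varphi\|=0$ for $\varphi\in C(M)$ (dominated convergence, using that $G_t\to0$ and $F_{-t}\to\id$ uniformly on $M$) and the density of $C(M)$ in $\H$. Stone's theorem then gives $V_t=\e^{itB}$ for a unique self-adjoint operator $B$, and the computation of the generator above shows $C^1(M)\subseteq\dom(B)$ with $B\varphi=i(X+g)\varphi=A\varphi$ for $\varphi\in C^1(M)$. Since $C^1(M)$ is dense in $\H$, invariant under $\{V_t\}_{t\in\R}$, and contained in $\dom(B)$, it is a core for $B$ (Nelson's criterion \cite[Thm.~VIII.10]{RS80}); therefore $\overline A=B$ is self-adjoint, i.e.\ $A$ is essentially self-adjoint. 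I expect the unitarity of $V_t$ to be the main obstacle: one has to check that the Jacobian-type anomaly created by the change of variables $x\mapsto F_{-t}(x)$ against the (possibly singular) measure $\mu$ is exactly compensated by the multiplier $\e^{-G_t}$, and this compensation is exactly what the symmetry of $A$ encodes.
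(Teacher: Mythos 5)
Your proposal is correct and follows essentially the same route as the paper: build the explicit one-parameter group $V_t$ generated formally by $\pm iA$, use the symmetry of $A$ to show that $V_t$ is isometric (hence unitary), observe that $C^1(M)$ is a dense invariant subspace on which the generator agrees with $A$, and invoke Nelson's criterion. The only difference is in the unitarity step: the paper computes $\frac{\d}{\d r}\langle V_r\psi,V_r\varphi\rangle = i\big(\langle AV_r\psi,V_r\varphi\rangle - \langle V_r\psi,AV_r\varphi\rangle\big)=0$ directly from the symmetry of $A$ on $C^1(M)$, whereas you first extract from symmetry the divergence-type identity $\int_M\d\mu\,X(u)=-2\int_M\d\mu\,g\hspace{1pt}u$ for $u\in C^1(M)$ and then verify $\frac{\d}{\d t}\|V_t\varphi\|^2=0$ by an explicit computation; both arguments are valid and encode the same cancellation.
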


\begin{proof}
Since $X$ is of class $C^1$, $X$ admits a $C^1$ flow $\{\zeta_s\}_{s\in\R}$
\cite[Thm.~3.43]{Irw01}. Thus, the operators
$$
V_s\varphi:=\e^{\int_0^s\d r\,(g\circ\zeta_r)}\varphi\circ\zeta_s,
\quad s\in\R,~\varphi\in C^1(M),
$$
are well-defined operators in $\H$. Simple calculations show that
$V_sV_t\varphi=V_{s+t}\varphi$ and $V_0\varphi=\varphi$ for $s,t\in\R$ and
$\varphi\in C^1(M)$, that $\lim_{\varepsilon\to0}\|(V_{s+\varepsilon}-V_s)\varphi\|=0$
for $s\in\R$ and $\varphi\in C^1(M)$, that $V_s\hspace{1pt}C^1(M)\subset C^1(M)$ for
$s\in\R$, and that $\frac\d{\d s}\hspace{1pt}V_s\varphi=-iAV_s\varphi$ for $s\in\R$
and $\varphi\in C^1(M)$. Furthermore, we have for $s\in\R$ and
$\psi,\varphi\in C^1(M)$ the equalities
$$
\big\langle V_s\psi,V_s\varphi\big\rangle-\langle\psi,\varphi\rangle
=\int_0^s\d r\,\frac\d{\d r}\;\!\big\langle V_r\psi,V_r\varphi\big\rangle
=i\int_0^s\d r\,\big(\big\langle AV_r\psi,V_r\varphi\big\rangle
-\big\langle V_r\psi,AV_r\varphi\big\rangle\big)
=0,
$$
due to the symmetricity of $A$. Therefore, the family $\{V_s\}_{s\in\R}$ satisfies on
$C^1(M)$ the properties of a strongly continuous $1$-parameter group of isometric
operators in $\H$ with $V_s\hspace{1pt}C^1(M)\subset C^1(M)$ for all $s\in\R$, and
with generator equal to $A$ on $C^1(M)$. Since $C^1(M)$ is dense in $\H$, it follows
that $\{V_s\}_{s\in\R}$ extends to a strongly continuous $1$-parameter group of
isometric (and thus unitary) operators in $\H$ with
$V_s\hspace{1pt}C^1(M)\subset C^1(M)$ for all $s\in\R$, and with generator equal to
$A$ on $C^1(M)$. Thus, Nelson's criterion \cite[Thm.~VIII.10]{RS80} implies that $A$
is essentially self-adjoint in $\H$.
\end{proof}

In the rest of the paper, in addition to Assumptions \ref{ass_1} and \ref{ass_2}, we
assume the following:

\begin{Assumption}\label{ass_3}
The vector fields $X_f$ and $X_\phi$ are of class $C^1$, $X_f(\rho)\in C(M)$ and
$\rho^{-1}X_f(\rho)\in C^1(M)$.
\end{Assumption}

The conditions $X_f(\rho)\in C(M)$ and $\rho^{-1}X_f(\rho)\in C^1(M)$ are equivalent
to the condition $X_f\big(\ln(\rho)\big)\in C^1(M)$. So, if one prefers, one can
replace the two conditions $X_f(\rho)\in C(M)$ and $\rho^{-1}X_f(\rho)\in C^1(M)$ in
Assumption \ref{ass_3} by the single condition $X_f\big(\ln(\rho)\big)\in C^1(M)$.

In the next proposition we define and prove the self-adjointness of the conjugate
operator. Intuitively, the conjugate operator is constructed as follows. First, we
take the sum of the vector field $2iX_f$ and its ``divergence'' $i\rho^{-1}X_f(\rho)$
to get a symmetric operator on $C^1(M)$. Then, we take the Birkhoff average of the
resulting operator along the flow $\{\phi_s\}_{s\in\R}$ to take into account the
unique ergodicity of $\{\phi_s\}_{s\in\R}$.

\begin{Proposition}[Conjugate operator]\label{Proposition_conjugate}
Suppose that Assumptions \ref{ass_1}, \ref{ass_2}, and \ref{ass_3} are satisfied.
Then, the operator
$$
A_t\varphi
:=\frac1t\int_0^t\d s\,U^\phi_s\big(2iX_f+i\rho^{-1}X_f(\rho)\big)
U^\phi_{-s}\varphi,\quad t>0,~\varphi\in C^1(M),
$$
is essentially self-adjoint in $\H$ (and its closure is denoted by the same symbol).
\end{Proposition}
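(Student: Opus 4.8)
The plan is to show that the operator $A_t$ restricted to $C^1(M)$ is of the form $i(X+g)$, with $X$ a $C^1$ vector field on $M$ and $g\in C^1(M;\R)$, and then to conclude by Lemma \ref{Lemma_essential}. For this I would first conjugate the generator $B:=2iX_f+i\rho^{-1}X_f(\rho)$ by the unitaries $U^\phi_s$. Regarding $\rho^{-1}X_f(\rho)$ as a multiplication operator one has $U^\phi_s\big(i\rho^{-1}X_f(\rho)\big)U^\phi_{-s}\varphi=i\big((\rho^{-1}X_f(\rho))\circ\phi_s\big)\varphi$, while Lemma \ref{Lemma_form_u}(a) gives $U^\phi_sX_fU^\phi_{-s}\varphi=X_f\varphi+\int_0^s\d r\,(u_{0,0}\circ\phi_r)X_\phi\varphi$ for $\varphi\in C^1(M)$. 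Since $s\mapsto U^\phi_sBU^\phi_{-s}\varphi$ is a continuous $C(M)$-valued map, the integral defining $A_t\varphi$ can be evaluated pointwise, and after averaging over $s\in[0,t]$ and using Fubini on the resulting double integral I expect to obtain
$$
A_t\varphi=i\big(2X_f+a_tX_\phi+b_t\big)\varphi,\quad
a_t:=\tfrac2t\int_0^t\d r\,(t-r)(u_{0,0}\circ\phi_r),\quad
b_t:=\tfrac1t\int_0^t\d s\,\big((\rho^{-1}X_f(\rho))\circ\phi_s\big).
$$
Assumption \ref{ass_3} is exactly what makes the coefficients $C^1$: by Lemma \ref{Lemma_form_u}(c) (which needs $X_f(\rho)\in C(M)$) one has $u_{0,0}=\ln(\lambda)+\rho^{-1}X_f(\rho)\in C^1(M)$, so that, since $\{\phi_s\}_{s\in\R}$ is a $C^1$ flow, differentiation under the integral sign gives $a_t\in C^1(M;\R)$; likewise $b_t\in C^1(M;\R)$ since $\rho^{-1}X_f(\rho)\in C^1(M)$. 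As $X_f$ and $X_\phi$ are of class $C^1$, the vector field $X:=2X_f+a_tX_\phi$ is of class $C^1$, so with $g:=b_t$ we are in the setting of Lemma \ref{Lemma_essential}.

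It then remains to check that $A_t$ is symmetric on $C^1(M)$. Since each $U^\phi_s$ is unitary and maps $C^1(M)$ into $C^1(M)$, this reduces to the symmetry of $B=2iX_f+i\rho^{-1}X_f(\rho)$ on $C^1(M)$. A short computation for $\psi,\varphi\in C^1(M)$, using the Leibniz rule for the $X_f$-directional derivative (legitimate because $X_f(\rho)\in C(M)$), gives
$$
\big\langle\psi,B\varphi\big\rangle-\big\langle B\psi,\varphi\big\rangle
=2i\int_M\d\mu\,\rho^{-1}X_f\big(\rho\hspace{1pt}\overline\psi\hspace{1pt}\varphi\big),
$$
which vanishes by the same mechanism that makes $i\rho^{-1}X_f$ symmetric on $C^1(M)$: by \eqref{eq_mu_tilde} one has $\rho^{-1}\d\mu=\big(\int_M\d\mu\,\rho^{-1}\big)\d\widetilde\mu$, and $\int_M\d\widetilde\mu\,X_f(h)=0$ for every continuous $h$ with $X_f(h)\in C(M)$ because $\widetilde\mu$ is invariant under $\{f_t\}_{t\in\R}$. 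Thus $B$, hence $A_t$, is symmetric on $C^1(M)$, and Lemma \ref{Lemma_essential} applied with this $X$ and $g$ gives the essential self-adjointness of $A_t$.

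The only delicate point is the first step: one must track the regularity of the coefficients and, in particular, make sure that $u_{0,0}$ is $C^1$ and not merely continuous — which is precisely what Lemma \ref{Lemma_form_u}(c) and Assumption \ref{ass_3} yield — so that $a_t$, and with it the vector field $X$, lies in the $C^1$ class required by Lemma \ref{Lemma_essential}. A secondary subtlety is that $\rho$ itself is only continuous, so $B$ is not a composition of operators preserving $C^1(M)$ and its symmetry must be obtained from the integral identity above rather than algebraically.
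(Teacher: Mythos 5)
Your proof is correct and follows essentially the same route as the paper: use Lemma \ref{Lemma_form_u}(a) to rewrite $A_t$ on $C^1(M)$ as $i\big(2X_f+a_tX_\phi+b_t\big)$, invoke Lemma \ref{Lemma_form_u}(c) and Assumption \ref{ass_3} to get the coefficients in $C^1$, check symmetry, and apply Lemma \ref{Lemma_essential}. The two minor differences — your $a_t=\tfrac2t\int_0^t\d r\,(t-r)(u_{0,0}\circ\phi_r)$ is just the paper's iterated integral after Fubini, and you verify the symmetry of $2iX_f+i\rho^{-1}X_f(\rho)$ by an explicit integration by parts against $\widetilde\mu$ where the paper disposes of it in one line via $\rho^{-1}X_f(\rho)=\widetilde\rho^{\,-1}X_f(\widetilde\rho)$ (i.e., $\rho^{-1}X_f(\rho)=\operatorname{div}_\mu X_f$) — are purely cosmetic.
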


\begin{proof}
Since $\rho^{-1}X_f(\rho)=\widetilde\rho\hspace{1pt}{}^{-1}X_f(\widetilde\rho)$,
the operator $\big(2iX_f+i\rho^{-1}X_f(\rho)\big)$ is symmetric on $C^1(M)$.
Therefore, the operator $A_t$ is also symmetric on $C^1(M)$ because
$U^\phi_s\hspace{1pt}C^1(M)\subset C^1(M)$ for all $s\in\R$. To show that $A_t$ is
essentially self-adjoint on $C^1(M)$, we take $\psi,\varphi\in C^1(M)$. Then, Lemma
\ref{Lemma_form_u}(a) implies that
\begin{align}
&\big\langle\psi,A_t\varphi\big\rangle\nonumber\\
&=\left\langle\psi,\frac1t\int_0^t\d s
\left(2iX_f+2i\int_0^s\d r\,\big(u_{0,0}\circ\phi_r\big)X_\phi
+i\big(\rho^{-1}X_f(\rho)\big)\circ\phi_s\right)\varphi\right\rangle\nonumber\\
&=\big\langle\psi,i\big(2X_f+a_tX_\phi+b_t\big)\varphi\big\rangle
\label{eq_A_t}
\end{align}
with
$$
a_t:=\frac2t\int_0^t\d s\int_0^s\d r\,\big(u_{0,0}\circ\phi_r\big)
\quad\hbox{and}\quad
b_t:=\frac1t\int_0^t\d s\,\big(\rho^{-1}X_f(\rho)\big)\circ\phi_s.
$$
Furthermore, Assumption \ref{ass_3}, Lemma \ref{Lemma_form_u}(c) and Leibniz integral
rule imply that $X_f$, $X_\phi$, $a_t$ and $b_t$ are of class $C^1$. Therefore, we can
apply Lemma \ref{Lemma_essential} with $X:=2X_f+a_tX_\phi$ and $g:=b_t$ to conclude
that $A_t$ is essentially self-adjoint in $\H$.
\end{proof}

\begin{Lemma}\label{Lemma_C2}
Suppose that Assumptions \ref{ass_1}, \ref{ass_2}, and \ref{ass_3} are satisfied.
Then, we have for $t>0$
\begin{enumerate}
\item[(a)] $\big(H_\phi-i\big)^{-1}\in C^1(A_t)$
with
$$
\big[i\big(H_\phi-i\big)^{-1},A_t\big]
=2\big(H_\phi-i\big)^{-1}c_tH_\phi\big(H_\phi-i\big)^{-1}
-\big[\big(H_\phi-i\big)^{-1},c_t\big]
\quad\hbox{and}\quad
c_t:=\frac1t\int_0^t\d s\,\big(u_{0,0}\circ\phi_s\big),
$$
\item[(b)] $\big(H_\phi-i\big)^{-1}\in C^2(A_t)$.
\end{enumerate}
\end{Lemma}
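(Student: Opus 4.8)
The plan is to establish the two regularity claims by controlling the commutator of the resolvent $(H_\phi-i)^{-1}$ with $A_t$ at the level of quadratic forms on $\dom(A_t)$, and then to iterate. The natural strategy is to first understand the commutator $[H_\phi, A_t]$ on the core $C^1(M)$, where everything is an explicit first-order differential operator. Using formula \eqref{eq_A_t}, on $C^1(M)$ we have $A_t = i\bigl(2X_f + a_t X_\phi + b_t\bigr)$, and since $H_\phi = iX_\phi$ on $C^1(M)$, the commutator $[iH_\phi, A_t]$ reduces on $C^1(M)$ to a first-order differential operator whose leading term comes from $[X_\phi, 2X_f]$ together with the terms $X_\phi(a_t) X_\phi$ and $X_\phi(b_t)$. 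The key algebraic input is Lemma \ref{Lemma_form_u}(a), which already packages the bracket between $X_\phi$ and $X_f$ in the form $U^\phi_s X_f U^\phi_{-s} = X_f + \int_0^s \d r\,(u_{0,0}\circ\phi_r) X_\phi$; differentiating at $s=0$ gives $[X_f, X_\phi]$-type information, and more usefully, applying $\frac{\d}{\d s}$ to the definition of $A_t$ itself shows that the Birkhoff averaging produces a telescoping identity. Concretely, I expect the computation to collapse to $[iH_\phi, A_t] = 2 c_t H_\phi + (\text{lower order, bounded})$ on $C^1(M)$, with $c_t = \frac1t\int_0^t \d s\,(u_{0,0}\circ\phi_s)$ as defined in the statement, the bounded remainder being exactly $-[H_\phi, c_t]$-type and the divergence term $b_t$. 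Rearranging via the resolvent identity \eqref{eq_resolvent}, namely $[i(H_\phi-i)^{-1}, A_t] = -(H_\phi-i)^{-1}[iH_\phi, A_t](H_\phi-i)^{-1}$, and substituting the computed form of $[iH_\phi,A_t]$, yields the asserted formula
$$
\big[i\big(H_\phi-i\big)^{-1},A_t\big] = 2\big(H_\phi-i\big)^{-1}c_tH_\phi\big(H_\phi-i\big)^{-1} - \big[\big(H_\phi-i\big)^{-1},c_t\big].
$$

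To make this rigorous rather than formal, I would verify that the quadratic form $\varphi \mapsto \langle A_t\varphi, (H_\phi-i)^{-1}\varphi\rangle - \langle (H_\phi-i)^{-1}\varphi, A_t\varphi\rangle$ on $\dom(A_t)$ is bounded, using that $C^1(M)$ is a core for both $H_\phi$ (by Nelson) and $A_t$ (by Proposition \ref{Proposition_conjugate}), and that $(H_\phi-i)^{-1}$ maps $C^1(M)$ into itself — or more precisely that $X_f(H_\phi-i)^{-1}\varphi \in \H$ for $\varphi\in C^1(M)$, which is exactly Lemma \ref{Lemma_form_u}(b). The right-hand side of the displayed formula is manifestly bounded: $c_t \in C^1(M) \subset \B(\H)$ (it is a Birkhoff average of the continuous function $u_{0,0}$, and $C^1$ by Leibniz's rule since $u_{0,0}\in C^1(M)$ under Assumption \ref{ass_3} combined with Lemma \ref{Lemma_form_u}(c)), $H_\phi(H_\phi-i)^{-1} = 1 + i(H_\phi-i)^{-1}$ is bounded, and $[(H_\phi-i)^{-1}, c_t]$ is bounded because $c_t \in C^1(M)$ implies $c_t \in C^1(H_\phi)$ with $[H_\phi, c_t] = iX_\phi(c_t) \in \B(\H)$, whence $[(H_\phi-i)^{-1}, c_t] = -(H_\phi-i)^{-1}[H_\phi,c_t](H_\phi-i)^{-1}$. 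This establishes (a).

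For (b), I would apply the same reasoning one more time: the operator $[i(H_\phi-i)^{-1}, A_t]$, now known explicitly from (a), must itself be shown to lie in $C^1(A_t)$. Each of its three building blocks — $(H_\phi-i)^{-1}$, the multiplication operator $c_t$, and $H_\phi(H_\phi-i)^{-1}$ — should be checked to be in $C^1(A_t)$. The resolvent is in $C^1(A_t)$ by part (a); $H_\phi(H_\phi-i)^{-1} = 1 + i(H_\phi-i)^{-1}$ is then also in $C^1(A_t)$; and $c_t$ is in $C^1(A_t)$ because $A_t$ acts on the $C^1$ function $c_t$ as a first-order differential operator, i.e. $[c_t, A_t] = i(2X_f + a_t X_\phi)(c_t) \cdot$ (multiplication) is bounded — here I use that $u_{0,0}$, hence $c_t$, is $C^1$, and that $a_t$ is $C^1$. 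Since $C^1(A_t)$ is an algebra, the product/composition of $C^1(A_t)$ elements stays in $C^1(A_t)$, so $[i(H_\phi-i)^{-1}, A_t] \in C^1(A_t)$, which is precisely the statement that $(H_\phi-i)^{-1} \in C^2(A_t)$.

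The main obstacle I anticipate is the rigorous justification of the formal differential-operator manipulations — in particular, ensuring that $[iH_\phi, A_t]$ genuinely extends from the core $C^1(M)$ to the claimed bounded operator sandwiched by resolvents, rather than merely matching as forms on $C^1(M)$. This requires care because $A_t$ involves $X_f$, which does not generate a unitary group and only an equibounded one (Lemma \ref{Lemma_strong}); the saving grace is Lemma \ref{Lemma_form_u}(b), which guarantees $X_f(H_\phi-i)^{-1}$ is a well-defined element of $\H$ on the core, so the sandwich $(H_\phi-i)^{-1}[iH_\phi,A_t](H_\phi-i)^{-1}$ makes sense and the density of $C^1(M)$ closes the argument. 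The second, milder difficulty is the bookkeeping in the second iteration for (b): one must confirm that every coefficient function appearing ($c_t$, $a_t$, $b_t$, $u_{0,0}$, $\rho^{-1}X_f(\rho)$) is genuinely $C^1$, which is exactly what Assumption \ref{ass_3} together with Lemma \ref{Lemma_form_u}(c) and Leibniz's rule provides.
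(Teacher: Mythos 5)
Your proposal is correct and arrives at the same formula, but the route you take for part (a) is genuinely different from the paper's. You work with the already-averaged operator $A_t = i(2X_f + a_t X_\phi + b_t)$ from \eqref{eq_A_t}, compute the commutator with $iH_\phi = -X_\phi$ as a formal first-order differential operator on $C^1(M)$, and exploit the telescoping identity that averaging produces (concretely $2u_{0,0} + X_\phi(a_t) = 2c_t$ and $X_\phi(b_t) = X_\phi(c_t)$ by Lemma~\ref{Lemma_form_u}(c)), before sandwiching with resolvents. The paper instead never writes down $[iH_\phi,A_t]$ on the core: it fixes the \emph{unaveraged} operator $A = 2iX_f + i\rho^{-1}X_f(\rho)$, computes the quadratic form $\langle(H_\phi+i)^{-1}\varphi, A\varphi\rangle - \langle A\varphi, (H_\phi-i)^{-1}\varphi\rangle$ directly at the resolvent level using the integral representation $(H_\phi-i)^{-1} = i\int_0^\infty \d r\, \e^{-r}U^\phi_r$ together with Lemma~\ref{Lemma_form_u}(a), and only at the very end averages over $s$ (using that $U^\phi_s$ commutes with the resolvent) to pass from $A$ to $A_t$. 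The paper's route sidesteps the delicate justification of differentiating $U^\phi_s X_f U^\phi_{-s}$ and never treats $[iH_\phi,A_t]$ as an operator on the core — it works exclusively with forms of bounded operators — whereas your route makes the telescoping structure of the Birkhoff average visible at the price of the rigor issue you yourself flag, which you then propose to close in essentially the same way the paper does (checking the form on $\dom(A_t)$ via Lemma~\ref{Lemma_form_u}(b) and density). Note one sign slip: the correct collapse is $[iH_\phi,A_t] = -2c_t H_\phi - iX_\phi(c_t)$, not $+2c_t H_\phi + (\cdots)$; with the minus sign the resolvent identity $[i(H_\phi-i)^{-1},A_t] = -(H_\phi-i)^{-1}[iH_\phi,A_t](H_\phi-i)^{-1}$ does yield the stated formula. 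Your treatment of part (b) — showing $(H_\phi-i)^{-1}$, $H_\phi(H_\phi-i)^{-1}$, and $c_t$ each lie in $C^1(A_t)$, with $c_t\in C^1(M)$ coming from Assumption~\ref{ass_3}, Lemma~\ref{Lemma_form_u}(c) and Leibniz, and then invoking the algebra structure of $C^1(A_t)$ — is essentially identical to the paper's.
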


\begin{proof}
(a) Set $A:=\big(2iX_f+i\rho^{-1}X_f(\rho)\big)$ on $C^1(M)$ and take
$\varphi\in C^1(M)$. Then, we know from Lemma \ref{Lemma_form_u}(b)-(c) that
\begin{align*}
&\big\langle\big(H_\phi+i\big)^{-1}\varphi,A\varphi\big\rangle
-\big\langle A\varphi,\big(H_\phi-i\big)^{-1}\varphi\big\rangle\\
&=\big\langle\big(2iX_f+i\rho^{-1}X_f(\rho)\big)\big(H_\phi+i\big)^{-1}\varphi,
\big(H_\phi-i\big)^{-1}\big(H_\phi-i\big)\varphi\big\rangle\\
&\quad-\big\langle\big(H_\phi+i\big)^{-1}\big(H_\phi+i\big)\varphi,
\big(2iX_f+i\rho^{-1}X_f(\rho)\big)\big(H_\phi-i\big)^{-1}\varphi\big\rangle\\
&=i\hspace{1pt}\frac\d{\d s}\Big|_{s=0}
\big\{\big\langle\big(2iX_f+i\rho^{-1}X_f(\rho)\big)\big(H_\phi+i\big)^{-1}\varphi,
\big(H_\phi-i\big)^{-1}\big(U^\phi_s-s\big)\varphi\big\rangle\\
&\quad\qquad\qquad-\big\langle\big(H_\phi+i\big)^{-1}\big(U^\phi_{-s}-s\big)\varphi,
\big(2iX_f+i\rho^{-1}X_f(\rho)\big)\big(H_\phi-i\big)^{-1}\varphi\big\rangle\big\}\\
&=-\frac\d{\d s}\Big|_{s=0}
\big\{\big\langle\big(H_\phi+i\big)^{-1}\varphi,
\big(2X_f+u_{0,0}\big)\big(H_\phi-i\big)^{-1}U^\phi_s\varphi\big\rangle\\
&\quad\qquad\qquad-\big\langle\big(H_\phi+i\big)^{-1}\varphi,
U^\phi_s\big(2X_f+u_{0,0}\big)\big(H_\phi-i\big)^{-1}\varphi\big\rangle\big\}\\
&=-2\left\langle\varphi,\frac\d{\d s}\Big|_{s=0}
\big(H_\phi-i\big)^{-1}\big\{X_f\big(H_\phi-i\big)^{-1}U^\phi_s
-U^\phi_sX_f\big(H_\phi-i\big)^{-1}\big\}\varphi\right\rangle\\
&\quad-\left\langle\varphi,\frac\d{\d s}\Big|_{s=0}
\big(H_\phi-i\big)^{-1}\big\{u_{0,0}\big(H_\phi-i\big)^{-1}U^\phi_s
-U^\phi_su_{0,0}\big(H_\phi-i\big)^{-1}\big\}\varphi\right\rangle.
\end{align*}
For the first term, the equation
$\big(H_\phi-i\big)^{-1}=i\int_0^\infty\d r\,\e^{-r}U^\phi_r$ (valid in the strong
sense) and Lemma \ref{Lemma_form_u}(a) imply
\begin{align*}
&\frac\d{\d s}\Big|_{s=0}\big(H_\phi-i\big)^{-1}
\big\{X_f\big(H_\phi-i\big)^{-1}U^\phi_s
-U^\phi_sX_f\big(H_\phi-i\big)^{-1}\big\}\varphi\\
&=i\hspace{1pt}\frac\d{\d s}\Big|_{s=0}\int_0^\infty\d r\,\e^{-r}U^\phi_s
\big(H_\phi-i\big)^{-1}\big(U^\phi_{-s}X_fU^\phi_s-X_f\big)U^\phi_r\varphi\\
&=i\hspace{1pt}\frac\d{\d s}\Big|_{s=0}\int_0^\infty\d r\,\e^{-r}U^\phi_s
\big(H_\phi-i\big)^{-1}\int_0^{-s}\d t\,
\big(u_{0,0}\circ\phi_t\big)U^\phi_rX_\phi\varphi\\
&=-i\int_0^\infty\d r\,\e^{-r}\big(H_\phi-i\big)^{-1}u_{0,0}U^\phi_rX_\phi\varphi\\
&=i\big(H_\phi-i\big)^{-1}u_{0,0}H_\phi\big(H_\phi-i\big)^{-1}\varphi.
\end{align*}
For the second term, a direct calculation gives
\begin{align*}
\frac\d{\d s}\Big|_{s=0}\big(H_\phi-i\big)^{-1}
\big\{u_{0,0}\big(H_\phi-i\big)^{-1}U^\phi_s
-U^\phi_su_{0,0}\big(H_\phi-i\big)^{-1}\big\}\varphi
=-i\hspace{0.5pt}\big[\big(H_\phi-i\big)^{-1},u_{0,0}\big]\varphi.
\end{align*}
So, putting together the last equations, we get
\begin{align*}
&\big\langle\big(H_\phi+i\big)^{-1}\varphi,A\varphi\big\rangle
-\big\langle A\varphi,\big(H_\phi-i\big)^{-1}\varphi\big\rangle\\
&=\big\langle\varphi,
\big\{-2i\big(H_\phi-i\big)^{-1}u_{0,0}H_\phi\big(H_\phi-i\big)^{-1}
+i\hspace{0.5pt}\big[\big(H_\phi-i\big)^{-1},u_{0,0}\big]\big\}\varphi\big\rangle.
\end{align*}
Therefore, for each $t>0$ we obtain
\begin{align*}
&\big\langle\big(H_\phi+i\big)^{-1}\varphi,A_t\varphi\big\rangle
-\big\langle A_t\varphi,\big(H_\phi-i\big)^{-1}\varphi\big\rangle\\
&=\frac1t\int_0^t\d s\,\big\langle\big(H_\phi+i\big)^{-1}U^\phi_{-s}\varphi,
AU^\phi_{-s}\varphi\big\rangle-\big\langle AU^\phi_{-s}\varphi,
\big(H_\phi-i\big)^{-1}U^\phi_{-s}\varphi\big\rangle\\
&=\frac1t\int_0^t\d s\,\big\langle U^\phi_{-s}\varphi,
\big\{-2i\big(H_\phi-i\big)^{-1}u_{0,0}H_\phi\big(H_\phi-i\big)^{-1}
+i\hspace{0.5pt}\big[\big(H_\phi-i\big)^{-1},u_{0,0}\big]\big\}
U^\phi_{-s}\varphi\big\rangle\\
&=\big\langle\varphi,\big\{-2i\big(H_\phi-i\big)^{-1}c_tH_\phi
\big(H_\phi-i\big)^{-1}
+i\hspace{0.5pt}\big[\big(H_\phi-i\big)^{-1},c_t\big]\big\}\varphi\big\rangle
\end{align*}
with $c_t=\frac1t\int_0^t\d s\,(u_{0,0}\circ\phi_s)$, and the claim follows by the
density of $C^1(M)$ in $\dom(A_t)$.

(b) We know from point (a) that $\big(H_\phi-i\big)^{-1}\in C^1(A_t)$ with
\begin{align*}
&\big[i\big(H_\phi-i\big)^{-1},A_t\big]\\
&=2\big(H_\phi-i\big)^{-1}c_tH_\phi\big(H_\phi-i\big)^{-1}
-\big[\big(H_\phi-i\big)^{-1},c_t\big]\\
&=2\big(H_\phi-i\big)^{-1}c_t
+2i\big(H_\phi-i\big)^{-1}c_t\big(H_\phi-i\big)^{-1}
-\big[\big(H_\phi-i\big)^{-1},c_t\big].
\end{align*}
So, it is sufficient to show that $c_t\in C^1(A_t)$. For this, we note that
$c_t\in C^1(M)$ due to the assumption $\rho^{-1}X_f(\rho)\in C^1(M)$, Lemma
\ref{Lemma_form_u}(c) and Leibniz integral rule. Then, we use \eqref{eq_A_t} to get
for $\varphi\in C^1(M)$
\begin{align*}
&\big\langle c_t\varphi,A_t\varphi\big\rangle
-\big\langle A_t\varphi,c_t\varphi\big\rangle\\
&=\big\langle\varphi,ic_t\big(2X_f+a_tX_\phi+b_t\big)\varphi\big\rangle
-\big\langle\varphi,i\big(2X_f+a_tX_\phi+b_t\big)c_t\varphi\big\rangle\\
&=\big\langle\varphi,-i\big(2X_f(c_t)+a_tX_\phi(c_t)\big)
\varphi\big\rangle
\end{align*}
with $\big(2X_f(c_t)+a_tX_\phi(c_t)\big)$ a bounded multiplication operator, and we
note that this implies the claim because $C^1(M)$ is dense in $\dom(A_t)$.
\end{proof}

In the next proposition, we show that $H_\phi$ satisfies a strict Mourre estimate on
compact subsets of $(0,\infty)$ and $(-\infty,0)$. We use the operator
$-\ln(\lambda)A_t$ as conjugate operator on $(0,\infty)$ and the operator
$\ln(\lambda)A_t$ as conjugate operator on $(-\infty,0)$.

\begin{Proposition}[Strict Mourre estimate]\label{Proposition_Mourre}
Suppose that Assumptions \ref{ass_1}, \ref{ass_2}, and \ref{ass_3} are satisfied.
\begin{enumerate}
\item[(a)] If $I\subset(0,\infty)$ is a compact set with
$I\cap\sigma(H_\phi)\ne\varnothing$, then there exist $t>0$ and $a>0$ such that
$$
E^{H_\phi}(I)\big[iH_\phi,-\ln(\lambda)A_t\big]E^{H_\phi}(I)
\ge a\hspace{1pt}E^{H_\phi}(I).
$$
\item[(b)] If $J\subset(-\infty,0)$ is a compact set with 
$J\cap\sigma(H_\phi)\ne\varnothing$, then there exist $t>0$ and $a>0$ such that
$$
E^{H_\phi}(J)\big[iH_\phi,\ln(\lambda)A_t\big]E^{H_\phi}(J)
\ge a\hspace{1pt}E^{H_\phi}(J).
$$
\end{enumerate}
\end{Proposition}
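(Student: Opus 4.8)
The plan is to convert the commutator identity of Lemma \ref{Lemma_C2}(a) into an explicit expression for the commutator of $H_\phi$ with $A_t$, and then to exploit the unique ergodicity of $\{\phi_s\}_{s\in\R}$, which forces the multiplication coefficient $c_t$ to concentrate near the positive constant $\ln(\lambda)$. First I would note that, since $(H_\phi-i)^{-1}\in C^1(A_t)$ by Lemma \ref{Lemma_C2}(a), the operator $H_\phi$ is of class $C^1(A_t)$. Combining the formula of Lemma \ref{Lemma_C2}(a) with the resolvent relation \eqref{eq_resolvent} — applied both to $A_t$ and to the bounded multiplication operator $c_t\in C^1(M)$ (for which $[H_\phi,c_t]=iX_\phi(c_t)$ is bounded) — then multiplying through by $(H_\phi-i)$ on each side and simplifying $2c_tH_\phi+[H_\phi,c_t]=c_tH_\phi+H_\phi c_t$, I expect to reach the clean identity
$$
[iH_\phi,A_t]=-\big(c_tH_\phi+H_\phi c_t\big),
$$
valid as a bounded form on $\dom(H_\phi)$, hence in particular $E^{H_\phi}(I)[iH_\phi,A_t]E^{H_\phi}(I)=-E^{H_\phi}(I)\big(c_tH_\phi+H_\phi c_t\big)E^{H_\phi}(I)$ as a bounded operator for every bounded Borel set $I$. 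Equivalently, $E^{H_\phi}(I)[iH_\phi,-\ln(\lambda)A_t]E^{H_\phi}(I)=\ln(\lambda)\,E^{H_\phi}(I)\big(c_tH_\phi+H_\phi c_t\big)E^{H_\phi}(I)$, and the same with $+\ln(\lambda)A_t$ and an overall sign change.

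Next I would pin down the behaviour of $c_t$ as $t\to\infty$. By Lemma \ref{Lemma_form_u}(c) one has $c_t=\ln(\lambda)+b_t$ with $b_t=\frac1t\int_0^t\d s\,\big(\rho^{-1}X_f(\rho)\big)\circ\phi_s$, while Lemma \ref{Lemma_limit} at $t=0$ gives $\int_M\d\mu\,u_{0,0}=\ln(\lambda)$, hence $\int_M\d\mu\,\rho^{-1}X_f(\rho)=0$. Since $\rho^{-1}X_f(\rho)\in C(M)$ and $\{\phi_s\}_{s\in\R}$ is uniquely ergodic with respect to $\mu$, Birkhoff averages of continuous functions converge uniformly, so $\|b_t\|_{\linf(M,\mu)}\to0$, i.e. $c_t\to\ln(\lambda)$ uniformly on $M$; recall also that $\ln(\lambda)>0$.

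With these two facts, the Mourre estimate is a quadratic-form computation. Fixing a compact $I\subset(0,\infty)$ and setting $\psi:=E^{H_\phi}(I)\varphi$ (so $\psi\in\dom(H_\phi)$ as $I$ is bounded, $\langle\psi,H_\phi\psi\rangle\ge(\min I)\|\psi\|^2$ since $\min I>0$, and $\|H_\phi\psi\|\le R_I\|\psi\|$ with $R_I:=\max_{\xi\in I}|\xi|$), I would write $c_t=\ln(\lambda)+b_t$ and use that $b_t$ is a bounded real multiplication operator to obtain
$$
\big\langle\psi,\ln(\lambda)\big(c_tH_\phi+H_\phi c_t\big)\psi\big\rangle
=2\ln(\lambda)^2\big\langle\psi,H_\phi\psi\big\rangle+2\ln(\lambda)\,\re\big\langle b_t\psi,H_\phi\psi\big\rangle
\ge2\ln(\lambda)^2(\min I)\|\psi\|^2-2\ln(\lambda)R_I\|b_t\|_{\linf(M,\mu)}\|\psi\|^2.
$$
Choosing $t>0$ so large that $2\ln(\lambda)R_I\|b_t\|_{\linf(M,\mu)}\le\ln(\lambda)^2(\min I)$ then gives (a) with $a:=\ln(\lambda)^2\min I>0$. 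For (b), with compact $J\subset(-\infty,0)$ and conjugate operator $+\ln(\lambda)A_t$, the sandwiched commutator equals $-\ln(\lambda)\,E^{H_\phi}(J)\big(c_tH_\phi+H_\phi c_t\big)E^{H_\phi}(J)$; since $\langle\psi,H_\phi\psi\rangle\le(\max J)\|\psi\|^2$ with $\max J<0$, the leading term $-2\ln(\lambda)^2\langle\psi,H_\phi\psi\rangle$ is bounded below by $-2\ln(\lambda)^2(\max J)\|\psi\|^2>0$, which dominates the error term of order $\|b_t\|_{\linf(M,\mu)}$ once $t$ is large, yielding the estimate with $a:=-\ln(\lambda)^2\max J>0$.

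The main obstacle is the first step: making the passage from the resolvent-level identity of Lemma \ref{Lemma_C2}(a) to the operator identity $[iH_\phi,A_t]=-(c_tH_\phi+H_\phi c_t)$ rigorous in the form sense required by Mourre theory. This is exactly where the $C^1(A_t)$-regularity established in Lemma \ref{Lemma_C2} does its work — it guarantees both that $E^{H_\phi}(I)[iH_\phi,A_t]E^{H_\phi}(I)$ is a genuine bounded operator and that it has the stated explicit form. Everything afterwards is elementary, resting only on the observations that $c_t$ concentrates near $\ln(\lambda)>1$ and that $H_\phi$ is bounded away from $0$ on $E^{H_\phi}(I)\H$.
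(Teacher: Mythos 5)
Your proof is correct and rests on the same pillars as the paper's: the $C^1(A_t)$ regularity and commutator formula of Lemma~\ref{Lemma_C2}(a), the resolvent relation~\eqref{eq_resolvent}, and the uniform convergence $c_t\to\ln(\lambda)>0$ forced by unique ergodicity. The only genuine difference is algebraic: you first simplify the commutator to the symmetric identity $[iH_\phi,A_t]=-(c_tH_\phi+H_\phi c_t)$ (using $2c_tH_\phi+[H_\phi,c_t]=c_tH_\phi+H_\phi c_t$ and the fact that $[H_\phi,c_t]=iX_\phi(c_t)$ is bounded since $c_t\in C^1(M)$), and then write $c_t=\ln(\lambda)+b_t$ with $b_t=\frac1t\int_0^t\d s\,(\rho^{-1}X_f(\rho))\circ\phi_s\to0$ uniformly. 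This collapses the error analysis to a single cross term $2\ln(\lambda)\re\langle b_t\psi,H_\phi\psi\rangle$ controlled by $\|b_t\|_{\linf}$ and $\max_{\xi\in I}|\xi|$, whereas the paper instead sandwiches the resolvent-level formula with $(H_\phi-i)E^{H_\phi}(I)$ and carries two separate error terms, $2\ln(\lambda)E^{H_\phi}(I)(c_t-\ln\lambda)H_\phi E^{H_\phi}(I)$ and $-\ln(\lambda)(H_\phi-i)E^{H_\phi}(I)\big[(H_\phi-i)^{-1},c_t-\ln\lambda\big](H_\phi-i)E^{H_\phi}(I)$. Your symmetrization buys a visibly self-adjoint form and a one-line quadratic-form estimate; the paper's manipulation keeps everything at the bounded-operator (resolvent) level, which sidesteps having to interpret $[iH_\phi,A_t]$ as an operator $\dom(H_\phi)\to\dom(H_\phi)^*$. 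You correctly flag this last point and correctly note that the $C^1(A_t)$-regularity from Lemma~\ref{Lemma_C2} is exactly what legitimises the passage. The treatment of case (b) by reversing the sign of the conjugate operator and using $\max J<0$ is also as in the paper. One tiny stylistic slip: you write ``$\ln(\lambda)>1$'' near the end where you mean $\ln(\lambda)>0$ (equivalently $\lambda>1$); the estimate only needs positivity.
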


Before the proof, we recall that the flow $\{\phi_s\}_{s\in\R}$ is ergodic. Therefore,
the spectrum of the operator $H_\phi$ in $\R\setminus\{0\}$ is not empty, and thus
there exist compact sets $I\subset(0,\infty)$ such that
$I\cap\sigma(H_\phi)\ne\varnothing$ and/or compacts sets $J\subset(-\infty,0)$ such that
$J\cap\sigma(H_\phi)\ne\varnothing$.

\begin{proof}
(a) Let $t>0$. Then, we know from Lemma \ref{Lemma_C2}(a) that
$\big(H_\phi-i\big)^{-1}\in C^1\big(-\ln(\lambda)A_t\big)$ with
$$
\big[i\big(H_\phi-i\big)^{-1},-\ln(\lambda)A_t\big]
=-2\ln(\lambda)\big(H_\phi-i\big)^{-1}c_tH_\phi\big(H_\phi-i\big)^{-1}
+\ln(\lambda)\big[\big(H_\phi-i\big)^{-1},c_t\big].
$$
This, together with \eqref{eq_resolvent}, implies that
\begin{align*}
&E^{H_\phi}(I)\big[iH_\phi,-\ln(\lambda)A_t\big]E^{H_\phi}(I)\\
&=-\big(H_\phi-i\big)E^{H_\phi}(I)\big[i\big(H_\phi-i\big)^{-1},
-\ln(\lambda)A_t\big]\big(H_\phi-i\big)E^{H_\phi}(I)\\
&=2\ln(\lambda)E^{H_\phi}(I)c_tH_\phi E^{H_\phi}(I)
-\ln(\lambda)\big(H_\phi-i\big)E^{H_\phi}(I)\big[\big(H_\phi-i\big)^{-1},c_t\big]
\big(H_\phi-i\big)E^{H_\phi}(I)\\
&=2\big(\ln(\lambda)\big)^2H_\phi E^{H_\phi}(I)
+2\ln(\lambda)E^{H_\phi}(J)\big(c_t-\ln(\lambda)\big)H_\phi E^{H_\phi}(I)\\
&\quad-\ln(\lambda)\big(H_\phi-i\big)E^{H_\phi}(I)
\big[\big(H_\phi-i\big)^{-1},c_t-\ln(\lambda)\big]\big(H_\phi-i\big)E^{H_\phi}(I)\\
&\ge a_IE^{H_\phi}(I)
+2\ln(\lambda)E^{H_\phi}(I)\big(c_t-\ln(\lambda)\big)H_\phi E^{H_\phi}(I)\\
&\quad-\ln(\lambda)\big(H_\phi-i\big)E^{H_\phi}(I)
\big[\big(H_\phi-i\big)^{-1},c_t-\ln(\lambda)\big]\big(H_\phi-i\big)E^{H_\phi}(I)
\end{align*}
with $a_I:=2\big(\ln(\lambda)\big)^2\inf(I)>0$. Furthermore, since
$\{\phi_s\}_{s\in\R}$ is uniquely ergodic, we obtain from Lemma \ref{Lemma_limit} that
$$
\lim_{t\to\infty}\big(c_t-\ln(\lambda)\big)
=\int_M\d\mu\,u_{0,0}-\ln(\lambda)
=0
$$
uniformly on $M$. Therefore, if $t>0$ is large enough, there exists $a\in(0,a_I)$ such that
$$
E^{H_\phi}(I)\big[iH_\phi,-\ln(\lambda)A_t\big]E^{H_\phi}(I)
\ge a\hspace{1pt}E^{H_\phi}(I).
$$

(b) The proof is similar to that of point (a).
\end{proof}

The next theorem is the main result of the paper.

\begin{Theorem}[Absolutely continuous spectrum]\label{Thm_spectrum}
Suppose that Assumptions \ref{ass_1}, \ref{ass_2}, and \ref{ass_3} are satisfied.
Then, $H_\phi$ has purely absolutely continuous spectrum, except at $0$, where it has
a simple eigenvalue with eigenspace $\C\cdot1$.
\end{Theorem}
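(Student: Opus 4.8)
The plan is to feed the ingredients assembled in Lemma \ref{Lemma_C2} and Proposition \ref{Proposition_Mourre} into the abstract Mourre theorem (Theorem \ref{thm_absolute}), and to treat the value $0$ separately using the ergodicity of $\{\phi_s\}_{s\in\R}$.

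First I would dispose of the point $0$. Since $\{\phi_s\}_{s\in\R}$ is uniquely ergodic, hence ergodic, with respect to $\mu$, the only vectors of $\H$ fixed by the group $\{U^\phi_s\}_{s\in\R}$ are the constants; equivalently $\ker(H_\phi)=\C\cdot1$ (this is the identity $\ker(H_\phi)^\perp=\{\varphi\in\H\mid\int_M\d\mu\,\varphi=0\}$ already used in the proof of Lemma \ref{Lemma_form_u}(c)). Consequently $E^{H_\phi}(\{0\})$ is the rank-one orthogonal projection onto $\C\cdot1$, so $0$ is a simple eigenvalue of $H_\phi$ with eigenspace $\C\cdot1$.

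Next I would show that $H_\phi$ has purely absolutely continuous spectrum in $(0,\infty)$. Write $(0,\infty)=\bigcup_{n\ge1}I_n$ with $I_n:=[1/n,n]$, a bounded Borel (indeed compact) subset of $(0,\infty)$. Fix $n$. If $I_n\cap\sigma(H_\phi)=\varnothing$ there is nothing to prove. Otherwise, Proposition \ref{Proposition_Mourre}(a) provides $t>0$ and $a>0$ such that
\[
E^{H_\phi}(I_n)\big[iH_\phi,-\ln(\lambda)A_t\big]E^{H_\phi}(I_n)\ge a\hspace{1pt}E^{H_\phi}(I_n),
\]
that is, a strict Mourre estimate on $I_n$ with conjugate operator $-\ln(\lambda)A_t$ (a self-adjoint operator by Proposition \ref{Proposition_conjugate}, since $\ln(\lambda)>0$). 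Moreover $\big(H_\phi-i\big)^{-1}\in C^2(A_t)$ by Lemma \ref{Lemma_C2}(b), and since $C^k$-regularity with respect to $A_t$ coincides with $C^k$-regularity with respect to any nonzero scalar multiple of $A_t$, the operator $H_\phi$ is of class $C^2\big(-\ln(\lambda)A_t\big)$. Theorem \ref{thm_absolute} (applied with $K=0$) then yields that $H_\phi$ has purely absolutely continuous spectrum in $I_n$, i.e. no singular spectrum there. Letting $n\to\infty$, it follows that $H_\phi$ has purely absolutely continuous spectrum in the countable union $(0,\infty)=\bigcup_nI_n$. Repeating the argument with Proposition \ref{Proposition_Mourre}(b) and conjugate operator $\ln(\lambda)A_t$ on the compact subintervals $J_n:=[-n,-1/n]$ of $(-\infty,0)$ gives the same conclusion on $(-\infty,0)$.

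Putting the pieces together, $H_\phi$ has purely absolutely continuous spectrum in $\R\setminus\{0\}=\bigcup_nI_n\cup\bigcup_nJ_n$, while at $0$ it has the simple eigenvalue with eigenspace $\C\cdot1$ described above; since $E^{H_\phi}(\{0\})$ accounts for the entire point mass of $H_\phi$ at $0$, this is the full description of the spectrum. I do not foresee a real obstacle: all the substantive analysis (essential self-adjointness of $A_t$, the $C^2(A_t)$-regularity of the resolvent, and the positivity of the commutator) has already been carried out, and what remains is the bookkeeping of covering $\R\setminus\{0\}$ by bounded sets together with the observation that the conjugate operator in Theorem \ref{thm_absolute} is allowed to depend on the chosen compact set.
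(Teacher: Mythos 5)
Your proof is correct and follows essentially the same route as the paper: apply Lemma \ref{Lemma_C2}(b) and Proposition \ref{Proposition_Mourre} to obtain a strict Mourre estimate for $H_\phi$, of class $C^2$ with respect to the scaled conjugate operator, on compact subsets of $(0,\infty)$ and $(-\infty,0)$, invoke Theorem \ref{thm_absolute} with $K=0$, and treat $0$ via ergodicity. The only difference is that you spell out the exhaustion $(0,\infty)=\bigcup_n[1/n,n]$ explicitly, which the paper leaves implicit.
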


\begin{proof}
We know from Lemma \ref{Lemma_C2}(b) that $H_\phi$ is of class
$C^2\big(-\ln(\lambda)A_t\big)$ for all $t>0$. Moreover, we know from Proposition
\ref{Proposition_Mourre}(a) that for each compact set $I\subset(0,\infty)$ with
$I\cap\sigma(H_\phi)\ne\varnothing$ there exist $t>0$ and $a>0$ such that
$$
E^{H_\phi}(I)\big[iH_\phi,-\ln(\lambda)A_t\big]E^{H_\phi}(I)
\ge a\hspace{1pt}E^{H_\phi}(I).
$$
Therefore, it follows from Theorem \ref{thm_absolute} that $H_\phi$ has purely
absolutely continuous spectrum in $(0,\infty)$. Since the same holds for
$(-\infty,0)$,	$H_\phi$ has purely absolutely continuous spectrum, except at $0$,
where it has a simple eigenvalue with eigenspace $\C\cdot1$ due to the ergodicity of
the flow $\{\phi_s\}_{s\in\R}$.
\end{proof}

We conclude with some remarks on Theorem \ref{Thm_spectrum}.

\begin{Remark}\label{rem_final}
(a) Under Assumptions \ref{ass_1}, \ref{ass_2}, and \ref{ass_3}, the result of Theorem
\ref{Thm_spectrum} applies, as in the case of Theorem \ref{Theorem_mixing}, to
reparametrisations of classical horocycle flows on the unit tangent bundle of compact
connected orientable surfaces of constant negative curvature.

Our regularity assumptions on the function $\rho$ are $X_f(\rho)\in C(M)$ and
$\rho^{-1}X_f(\rho)\in C^1(M)$. Therefore, Theorem \ref{Thm_spectrum} extends the
results of \cite{FU12,Tie12,Tie15} on the absolutely continuous spectrum of time
changes of the classical horocycle flows on the unit tangent bundle of compact
orientable surfaces of constant negative curvature since the function corresponding to
$\rho$ in \cite{FU12,Tie12,Tie15} is at least of class $C^3$ (in \cite{Tie12}, the function
$\rho$ is of class $C^2$, but it satisfies another additional assumption). Also, the
result of Theorem \ref{Thm_spectrum} answers the question on the regularity of the
function $\rho$ raised in \cite[Rem.~3.4]{Tie15}. The result of \cite{FU12} on
Lebesgue spectrum and the result of \cite{Tie12} for surfaces of finite volume are of
a different nature and are not covered by Theorem \ref{Thm_spectrum}.

(b) In the case of the classical horocycle flows on the unit tangent bundle of compact
connected orientable surfaces of negative curvature, L. W. Green has shown in
\cite[Thm.~B]{Gre78} that the curvature of the surface is necessarily constant if the
flow $\{\phi_s\}_{s\in\R}$ admits a $C^2$ uniformly expanding reparametrisation
$\{\widetilde\phi_s\}_{s\in\R}$ (and thus a function $\rho$ of class $C^1$).  Since it
is possible that a similar argument also applies under our regularity assumptions (an
anonymous referee signaled this to us), we prefer not to mention the
case of compact surfaces of variable negative curvature. However, we hope that in some future
the technics and the regularity assumptions presented in this manuscript could be improved in order to cover explicit examples
of compact surfaces of variable negative curvature.

(c) In Lemma \ref{Lemma_C2}, Proposition \ref{Proposition_Mourre} and Theorem
\ref{Thm_spectrum}, we stated and proved the results using only the operator $H_\phi$
under study, and not its square $(H_\phi)^2$ as we did in \cite{Tie12,Tie15}. This
allowed us to simplify the exposition in comparison to \cite{Tie12,Tie15}.
\end{Remark}

%--------------------------------------------------------------------------------------
%\bibliography{../bibliographie/bibliographie}
%--------------------------------------------------------------------------------------

\def\cprime{$'$} \def\polhk#1{\setbox0=\hbox{#1}{\ooalign{\hidewidth
  \lower1.5ex\hbox{`}\hidewidth\crcr\unhbox0}}}
  \def\polhk#1{\setbox0=\hbox{#1}{\ooalign{\hidewidth
  \lower1.5ex\hbox{`}\hidewidth\crcr\unhbox0}}}
  \def\polhk#1{\setbox0=\hbox{#1}{\ooalign{\hidewidth
  \lower1.5ex\hbox{`}\hidewidth\crcr\unhbox0}}} \def\cprime{$'$}
  \def\cprime{$'$} \def\polhk#1{\setbox0=\hbox{#1}{\ooalign{\hidewidth
  \lower1.5ex\hbox{`}\hidewidth\crcr\unhbox0}}}
  \def\polhk#1{\setbox0=\hbox{#1}{\ooalign{\hidewidth
  \lower1.5ex\hbox{`}\hidewidth\crcr\unhbox0}}} \def\cprime{$'$}
  \def\cprime{$'$} \def\cprime{$'$}

%--------------------------------------------------------------------------------------

\end{document}